\documentclass[a4paper,reqno,11pt]{amsart}

\usepackage[utf8]{inputenc} %
\usepackage{textcomp}
\usepackage[T1]{fontenc}

\usepackage[colorlinks,backref]{hyperref}
\usepackage{srcltx}
\usepackage{multicol}

\usepackage{fullpage}
\usepackage{setspace}
\usepackage{datetime}
\usepackage{url}

\usepackage{amsfonts} %
\usepackage{stmaryrd} %
\usepackage{amsmath, amssymb, amsthm}
\usepackage{thmtools}
\usepackage{bm}
\usepackage{bbm}
\usepackage{mathtools}
\usepackage{enumitem}
\usepackage{./atalhos}
\usepackage{./notation}
\usepackage[portuguese,ruled,noline,noend,linesnumbered]{algorithm2e}

\makeatletter

\makeatother

\makeatletter
\def\@setdate{\datename\ \@date}
\def\@setaddresses{\par
  \nobreak \begingroup
\footnotesize
  \def\author##1{\nobreak\addvspace\bigskipamount}%
  \def\\{\unskip, \ignorespaces}%
  \interlinepenalty\@M
  \def\address##1##2{\begingroup
    \par\addvspace\bigskipamount\indent
    \@ifnotempty{##1}{(\ignorespaces##1\unskip) }%
    {\scshape\ignorespaces##2}\par\endgroup}%
  \def\curraddr##1##2{\begingroup
    \@ifnotempty{##2}{\nobreak\indent{\itshape Current address}%
      \@ifnotempty{##1}{, \ignorespaces##1\unskip}\/:\space
      ##2\par}\endgroup}%
  \def\email##1##2{\begingroup
    \@ifnotempty{##2}{\nobreak\indent{\itshape Email
    addresses}%
      \@ifnotempty{##1}{, \ignorespaces##1\unskip}\/:\space
      \ttfamily##2\par}\endgroup}%
  \def\urladdr##1##2{\begingroup
    \@ifnotempty{##2}{\nobreak\indent{\itshape URL}%
      \@ifnotempty{##1}{, \ignorespaces##1\unskip}\/:\space
      \ttfamily##2\par}\endgroup}%
  \addresses
  \endgroup
}
\makeatother

\def\({\left(}
\def\){\right)}  
\def\<{\langle}
\def\>{\rangle}
\let\:=\colon

\usepackage{datetime}
\usepackage{lineno}
\newcommand*\patchAmsMathEnvironmentForLineno[1]{%
\expandafter\let\csname old#1\expandafter\endcsname\csname #1\endcsname
\expandafter\let\csname oldend#1\expandafter\endcsname\csname end#1\endcsname
\renewenvironment{#1}%
{\linenomath\csname old#1\endcsname}%
{\csname oldend#1\endcsname\endlinenomath}}%
\newcommand*\patchBothAmsMathEnvironmentsForLineno[1]{%
\patchAmsMathEnvironmentForLineno{#1}%
\patchAmsMathEnvironmentForLineno{#1*}}%
\AtBeginDocument{%
\patchBothAmsMathEnvironmentsForLineno{equation}%
\patchBothAmsMathEnvironmentsForLineno{align}%
\patchBothAmsMathEnvironmentsForLineno{flalign}%
\patchBothAmsMathEnvironmentsForLineno{alignat}%
\patchBothAmsMathEnvironmentsForLineno{gather}%
\patchBothAmsMathEnvironmentsForLineno{multline}%
}

\begin{document}
\shortdate
\yyyymmdddate
\settimeformat{ampmtime}
\date{\today, \currenttime}
\onehalfspacing
\title{%
    Ramsey-type problems for orientations of graphs
}

\author[Bruno Pasqualotto Cavalar]{%
  Bruno Pasqualotto Cavalar*
}
\thanks{*The author acknowledges support from São Paulo Research
Foundation (FAPESP), grants \#2015/26678-9 and \#2018/05557-7.}
\address{Instituto de Matem\'atica e Estat\'{\i}stica, Universidade de S\~ao
  Paulo, Rua do Mat\~ao 1010, 05508--090~S\~ao Paulo, SP}
\email{brunopc@ime.usp.br}

\begin{abstract}
Given an acyclic oriented graph $\dgh$
and a graph $G$,
we write
$G \rms \dgh$
if
every orientation of $G$
has an oriented copy of
$\dgh$.
We define
$\dr(\dgh)$
as the smallest number $n$ such that there exists a graph
$G$ of order~$n$
satisfying $G \rms \dgh$.
Denoting by $R(H)$
the classical Ramsey number of a graph $H$,
we show that
$\dr(\dgh) \leq 2R(H)^{c \log^2 h}$
for every acyclic oriented graph $\dgh$ with $h$ vertices,
where $H$ is its underlying undirected graph.
We also study the threshold function for
the event 
$\{G(n,p) \rms \dgh\}$
in the binomial random graph.
Finally, we 
consider
the
{isometric} 
model,
and
we prove an upper bound for the
isometric Ramsey number of an acyclic orientation of the
cycle,
by applying the hypergraph container lemma in random
graphs
and
adapting an argument of
{H{\`a}n},
{Retter},
{R{\"o}dl}
and
{Schacht}.
\end{abstract}

\maketitle
\pagestyle{plain}
\footskip=25pt

\vspace{-20pt}
\section{Introduction}
Given graphs $G$ and $H$,
we write
$G \rms H$
to denote that
every
two-coloring of
the edges of $G$
contains a monochromatic copy of $H$. 
The Ramsey number $R(H)$ of a graph $H$ is 
defined as
\begin{equation*}
    R(H)
    :=
    \inf\set{
        n \in \bbn : 
        \text{there exists a graph } 
        G = G^n
        \text { such that }
        G \rms H
    },
\end{equation*}
where $G = G^n$ denotes that $G$ is a graph on $n$ vertices.
This number was proved to be finite 
by
Ramsey~\cite{ramsey_original}
and
Erd\H{o}s and Szekeres~\cite{ramsey_erdos2}.
Finding bounds
for $R(H)$ 
is a classical problem in combinatorics
(see e.g. the dynamic survey of 
Radziszowski~\cite{radz_survey_ramsey}).
Moreover, the threshold function for
the property that a random graph $G(n,p)$
satisfies
$G(n,p) \to H$
is well-studied~\cite{rodl_threshold}.
We
study the same problems
for 
a variant of this notion
for orientations of graphs.
Let us begin with a few definitions.

\subsection{Digraphs and oriented graphs}
A
\defx{directed graph}
or
\defx{digraph}
$\dgg$
is a pair
$\dgg = (V,E)$
where
$V$ 
is 
a set of vertices
and
$E$
is a set such that
$E \sseq (V \times V) \sm \set{(v,v) : v \in V}$.
Just as in the case
of undirected graphs,
an element of $E$ is called
an
\defx{edge};
however, it may also be called
an
\defx{arc}
to differ from the undirected case.
An 
\defx{oriented graph}
$\dgg=(V,E)$
is a digraph
where
$(u,v) \in E$
implies
$(v,u) \notin E$
for every 
$u,v \in V$.
Moreover,
an oriented graph
$\dgg = (V, E)$
is said to be
an
\defx{orientation}
of a graph
$G = (V',E')$
if
$V=V'$
and,
for every 
$u,v \in V=V'$,
we have
$\set{u,v} \in E'$
if and only if
$(u,v) \in E$
or
$(v,u) \in E$.
In this case,
we say that 
$G$
is the
\defx{underlying 
graph}
of
$\dgg$.
Furthermore,
when $\dgg$ is an oriented graph,
we write 
$G$
to denote the underlying undirected graph of $\dgg$.
We will always denote a digraph
by a capital letter with
$\dgarw$.

\subsection{Oriented Ramsey number}
\label{sec:oriented_rms_number}
Given a graph $G$
and an oriented graph $\dgh$,
let us write
$G \rms \dgh$
to denote that
every orientation of 
the edges
of $G$
contains a copy of $\dgh$.
Since every 
graph 
admits
an acyclic orientation,
it is not possible for 
$G \rms \dgh$  to occur if $\dgh$
contains directed cycles.
In other words,
the oriented graph
$\dgh$ must be acyclic.
One may also ask for bounds on the
\emph{oriented Ramsey number}
$\dr(\dgh)$,
which is 
defined as
\begin{equation*}
    \dr(\dgh)
    :=
    \inf\set{
        n \in \bbn : 
        \text{there exists a graph } 
        G = G^n
        \text { such that }
        G \rms \dgh
    }.
\end{equation*}
We can define
$\dr(\dgh)$
equivalently
as the smallest natural number $n$ 
such that
every tournament on $n$ vertices contains $\dgh$.
To the best of our knowledge, this number was first studied by
Erd\H{o}s and Moser, who proved the following theorem.
\begin{theorem}[Erd\H{o}s and Moser~\cite{erdos_moser_tournaments}]
    Let 
    $\dgk_k$
    be the transitive tournament on $k$ vertices.
    We have
    \begin{equation*}
        2^{(k-1)/2}
        \leq
        \dr(\dgk_k)
        \leq
        2^{k-1}.
    \end{equation*}
\end{theorem}
Since every acyclic oriented graph $\dgh$
is contained in the transitive
tournament with 
the same number of
vertices,
this implies that $\dr(\dgh)$ is finite.

We now briefly survey a few bounds for the oriented Ramsey number of
orientations of paths, cycles and trees.
Let $\dgh$ be the directed path on $k$ vertices.
A well-known result obtained independently by
Gallai, Hasse, Roy and Vitaver 
(see, for example, 
Theorem 14.5 of 
Bondy and Murty~\cite{bondy_murty})
implies that
$\dr(\dgh) = k$.

Now suppose that $\dgh$ is any orientation of the undirected path with
$k \geq 2^{128}$ vertices.
The result above
was improved by Thomason~\cite{thomason_paths_cycles},
who showed that
we also have $\dr(\dgh) = k$ in this case.
This condition was later weakened to~$k \geq 9$
by Havet and Thomassé~\cite{havet_thomasse}.

Suppose now that 
$\dgh$ is an acyclic orientation of the cycle on $k$ vertices.
The same work due to
Thomason~\cite{thomason_paths_cycles}
also
showed that,
if $k \geq 2^{128}$,
then $\dr(\dgh)=k$.
Havet and Thomassé's work~\cite{havet_thomasse}
again reduced this requirement to $k \geq 68$.
Moreover,
Heydemann, Sotteau and Thomassen~\cite{orientations_hamilton_cycles}
proved that,
if a graph $G$ has at least 
$(k-1)(k-2)+3$ edges,
then $G \rms \dgh$.
One can therefore do a simple calculation to show that
$\dr(\dgh) \leq \sqrt{2}k$,
thus proving that a linear bound for the oriented Ramsey number
also holds for cycles of size smaller than $68$.

Suppose that 
$\dgh$ is an oriented tree on $k$ vertices.
A conjecture due to Sumner (see~\cite{haggkvist_thomason})
says that
$\dr(\dgh) \leq 2k-2$.
This inequality is optimal, as there are
oriented trees for which their oriented Ramsey number is at least
$2k-2$.
A linear upper bound to $\dr(\dgh)$ was first proved by
H\"{a}ggkvist and Thomason~\cite{haggkvist_thomason}, who showed that
$\dr(\dgh) \leq 12k$.
El Sahili~\cite{sahili_oriented_trees}
proved that $\dr(\dgh) \leq 3k-3$ for all oriented trees on $k$ vertices,
though its known that $\dr(\dgh)=k$ for almost all such
trees, as proved by Mycroft and Naia~\cite{naia_unavoiadble_trees}.
El Sahili's bound is currently the best upper bound when $k$ is small,
but, for large enough $k$,
Sumner's conjecture was proved by K\"{u}hn, Mycroft
and Osthus~\cite{proof_sumner_conjecture}.
A good survey about the existing work on bounds for the oriented Ramsey number
of trees can be found in the Introduction of~\cite{naia_phdthesis}.

Finally, we remark that
a work of Bloom and Burr~\cite{bloom_burr_unavoidable}
connects the oriented Ramsey number of an oriented graph with 
its family of homomorphisms.
We also remark that, in most of the literature, the oriented Ramsey number is not
studied with the language of Ramsey theory, but rather it appears under the
guise of ``unavoidable digraphs'' and related terminology.
We believe that connecting this notion to the well-studied Ramsey theory can
provide many benefits.

Unlike the classical Ramsey number,
about which much is known,
little else 
has been published on bounds 
for 
the oriented Ramsey number.
In Section~\ref{sec:oriented_ramsey},
we
apply results and concepts from
Conlon, Fox, Lee, and Sudakov~\cite{conlon_ordered}
and
Balko, 
Cibulka,
Kr{\'{a}}l 
and
Kyn\v{c}l~\cite{balko_ordered},
so as to show that
$\dr(\dgh) \leq 2R(H)^{c \log^2 h}$.
This gives an automatic upper bound on the oriented Ramsey number of
every oriented graph by considering the Ramsey number of its underlying
undirected graph, with only logarithmic loss in the exponent.
We leave open the question of whether there exists an oriented graph for
which this bound is tight.

\subsection{An oriented Ramsey theorem for random graphs}
\label{subsec:intro_random}
For a graph $H$,
we denote by $m_2(H)$ its
\defx{$2$-density},
defined as
\begin{equation*}
    m_2(H) := 
    \max_{F \sseq H, v(F) \geq 3}
    \frac{e(F)-1}{v(F)-2}.
\end{equation*}
Consider also the
binomial random graph $G(n,p)$,
which is the random graph
of order~$n$
in which each edge appears
independently
with probability $p$.
A celebrated result of
R\"odl and Ruci\'nski~\cite{rodl_threshold}
determined,
for an undirected graph $H$,
the threshold function for
$G(n,p) \rms H$.
(Here we state only the $1$-statement.)

\begin{theorem}[R\"odl and Ruci\'nski~\cite{rodl_threshold}]
    \label{thm:rodl_rucinski}
    Let $H$ be a graph.
    There exists a constant 
    $C = C(H)$ 
    such that,
    if 
    $p \geq Cn^{-1/m_2(H)}$, 
    then
    \begin{equation*}
        \lim_{n \to \infty} \bbp[G(n,p) \rms H] = 1.
    \end{equation*}
\end{theorem}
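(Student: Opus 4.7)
The plan is to follow the sparse regularity method pioneered by R\"odl and Ruci\'nski. Given an arbitrary 2-coloring of the edges of $G(n,p)$ with colors red and blue, at least one of the two color classes, say the red subgraph $\Gamma$, contains at least half of the edges of $G(n,p)$; it therefore suffices to show that with high probability $\Gamma$ must contain a copy of $H$.

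First, I would apply the Kohayakawa--R\"odl sparse regularity lemma to $\Gamma$ viewed as a subgraph of $G(n,p)$. This produces a partition of $V(G(n,p))$ into $t$ parts of roughly equal size in which almost all pairs of parts are $(\varepsilon, p)$-regular with respect to $\Gamma$. Because $\Gamma$ captures at least half of the edges of $G(n,p)$, the associated \emph{reduced graph}---whose edges are those regular pairs of relative density at least $\delta$, for a suitable constant $\delta > 0$---has edge density at least $1/2 - O(\varepsilon + \delta)$. By the Erd\H{o}s--Stone--Simonovits theorem applied at the reduced-graph level, and choosing $t$ sufficiently large via the regularity parameters, the reduced graph contains a copy of $H$. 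This gives a tuple of $(\varepsilon, p)$-regular pairs of $\Gamma$ whose underlying bipartite pattern matches that of $H$.

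Second, I would invoke the K{\L}R theorem of Kohayakawa, {\L}uczak and R\"odl---now established for the relevant range $p \geq C n^{-1/m_2(H)}$ via the hypergraph container method of Balogh--Morris--Samotij and Saxton--Thomason (and by the transference results of Schacht and Conlon--Gowers)---to conclude that, with high probability, $G(n,p)$ enjoys the property that every collection of $(\varepsilon, p)$-regular pairs of density at least $\delta$ arranged in the shape of $H$ must contain an actual copy of $H$. Combining this with the previous step yields a copy of $H$ inside the majority color class $\Gamma$.

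The main obstacle in this plan is the K{\L}R theorem itself: it was a notorious open problem for more than a decade, and all known proofs rely on substantial machinery (hypergraph containers, or the sparse/dense transference of Schacht and Conlon--Gowers). Once that tool is taken as a black box, the rest of the argument is a relatively standard marriage of sparse regularity with the classical Erd\H{o}s--Stone theorem at the reduced-graph level; the delicate point is to check that the parameter hierarchy $1/t \ll \varepsilon \ll \delta \ll 1/C \ll 1/v(H)$ can be chosen consistently with the regime $p \geq C n^{-1/m_2(H)}$ in which the K{\L}R theorem holds.
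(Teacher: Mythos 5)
The paper does not prove Theorem~\ref{thm:rodl_rucinski}: it is quoted as a known result of R\"odl and Ruci\'nski and used as a black box, so there is no in-paper proof to compare your sketch against.

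Nevertheless, your sketch as written has a genuine gap. Passing to the majority colour class $\Gamma$ and then applying Erd\H{o}s--Stone--Simonovits to a reduced graph of density roughly $1/2$ only produces a copy of $H$ when $\chi(H)\le 2$; for non-bipartite $H$ the Tur\'an threshold $1-1/(\chi(H)-1)$ is at least $1/2$, so a density of $1/2$ in the reduced graph gives you nothing (and in general a single colour class of a Ramsey host graph need not be Tur\'an-dense for $H$). The standard fix is to keep both colours: apply a sparse regularity lemma simultaneously to the two colour classes of $G(n,p)$, form a near-complete $2$-coloured reduced graph on $t\ge R(H)$ parts, and invoke the classical Ramsey theorem --- not Erd\H{o}s--Stone --- at the reduced level to obtain a monochromatic arrangement of dense $(\varepsilon,p)$-regular pairs matching $H$. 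With that correction, the remainder of your outline (the K{\L}R/container embedding step and the parameter hierarchy) is a legitimate modern route to the $1$-statement of Theorem~\ref{thm:rodl_rucinski}; note though that it differs from R\"odl and Ruci\'nski's original 1995 argument, which predates both the K{\L}R theorem and the hypergraph container method and does not rely on either.
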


Define
$m_2(\dgh) := m_2(H)$.
In
Section~\ref{sec:oriented_random},
we prove the following version 
of Theorem~\ref{thm:rodl_rucinski}
for acyclic oriented graphs.
\begin{theorem*}
    Let $\dgh$ be an acyclic oriented graph.
    There exists a constant $C = C(\dgh)$ such that,
    if 
    $p \geq Cn^{-1/m_2(\dgh)}$, 
    then
    \begin{equation*}
        \lim_{n \to \infty} 
        \bbp\left[G(n,p) \rms \dgh\right] 
        = 1.
    \end{equation*}
\end{theorem*}

Adapting arguments from
Nenadov and Steger~\cite{nenadov_steger},
our proof of Theorem~\ref{thm:random_dir_rms} makes
use of the hypergraph container lemma of
Balogh, Morris and Samotij~\cite{hypergraphs_morris} 
and 
Saxton and Thomason~\cite{saxton_containers}.
In Section~\ref{sec:containers},
we develop the necessary container theory for digraphs
that allows us to
prove
Theorem~\ref{thm:random_dir_rms}
in Section~\ref{sec:oriented_random}.

The technique of using hypergraph containers in random
graphs for Ramsey problems 
has recently been 
employed
by
{H{\`a}n},
{Retter},
{R{\"o}dl}
and
{Schacht}~\cite{han_ramsey_containers},
{R{\"o}dl},
{Ruci{\'{n}}ski} 
and
{Schacht}~\cite{rodl_folkman}
and
{Conlon}, 
{Dellamonica}, 
{La Fleur},
{R{\"o}dl} and 
{Schacht}~\cite{induced_container}.
Our approach is also inspired by theirs,
and
some resemblance to their arguments is to be expected.

\subsection{Isometric oriented Ramsey number}
Finally, we consider the
\emph{isometric oriented Ramsey number}
$\dir(\dgh)$
of an acyclic oriented graph $\dgh$,
a concept first introduced by
{Banakh}, 
{Idzik}, 
{Pikhurko}, 
{Protasov}
and
{Pszczo{\l}a}~\cite{banakh16:_isomet}.

For an undirected graph
$G$,
we denote by
$d_G(u,v)$
the distance between
two vertices
$u, v \in V(G)$.
Given
two oriented graphs
$\dgh$
and
$\dgf$,
we say
that a 
copy
$f : V(\dgh) \to V(\dgf)$
of
$\dgh$
in
$\dgf$
is
an
\defx{isometric copy}
if
$d_H(x,y) = d_F(f(x),f(y))$
for
every
$x,y \in V(\dgh)$.
Note that the distance is taken
with respect to
the
underlying undirected graphs.

Given an oriented graph $\dgh$
and a graph $G$,
we write
$G \irms \dgh$
if
every orientation of $G$
has an isometric oriented copy of
$\dgh$.
The
\defx{isometric oriented Ramsey number}
$\dir(\dgh)$
is defined as 
\begin{equation*}
    \dir(\dgh)
    := 
    \inf
    \set{n \in \bbn : 
        \text{there exists a graph } 
        G = G^n
        \text{ such that }
        G \irms \dgh}.
\end{equation*}
It was proved in
\cite[Theorem 2.1]{banakh16:_isomet}
that
the isometric oriented Ramsey number
of acyclic oriented graphs
is always finite.
In Section~\ref{sec:isometric},
we devise a bound for 
$\dir(\dgh)$
when
$\dgh$
is
an acyclic orientation of a cycle,
adapting a construction of
{H{\`a}n},
{Retter},
{R{\"o}dl},
and
{Schacht}~\cite{han_ramsey_containers}.
In particular, we prove the following theorem.

\begin{theorem*}
    There exists 
    a 
    positive constant 
    $c$ 
    such that
    the following holds.
    Let
    $\dgh$
    be an acyclic orientation of $C_k$
    and set
    $R := \dr(\dgh)$.
    Then
    \begin{equation}
        \label{ineq:isometric_cycle}
        \dir(\dgh) 
        \leq 
        c k^{12k^3} R^{8k^2}.
    \end{equation}
\end{theorem*}
The proof also makes use of the container results we will develop in
Section~\ref{sec:containers}.

\section{Bounds for the oriented Ramsey number}
\label{sec:oriented_ramsey}

Before stating our bounds, we introduce the concept of
ordered graphs and ordered Ramsey numbers,
recently studied in
Balko, 
Cibulka,
Kr{\'{a}}l 
and
Kyn\v{c}l~\cite{balko_ordered}
and
Conlon, Fox, Lee, and Sudakov~\cite{conlon_ordered}.

An 
\defx{ordered graph}
$G$
is a pair
$G = (G', <_G)$
where $G'$ is a graph
and $<_G$ is a total ordering of the vertices of $G'$.
For convenience we write
$V(G):=V(G')$ 
and
$E(G):=E(G')$.
When a graph $G$ is equipped with a
total ordering of its
vertices, we will simply refer to $G$ as an
ordered graph without further qualifications.

An ordered graph 
$G$ 
is said to
\defx{contain} 
an ordered graph 
$H$ 
if
there exists
a function
$\phi:V(H) \to V(G)$
such that,
for every 
$x,y \in V(H)$,
we have 
$\phi(x) <_G \phi(y)$
if and only if
$x <_H y$,
and
$\{i,j\}$ is an edge of $H$ 
only if
$\{\phi(i),\phi(j)\}$
is an edge of $G$.
In this case, we call~$\phi$ a
\defx{monotone embedding}.

If the graphs $H$ and $G$ are ordered graphs,
we write
$G \orms H$
to denote that 
every two-coloring of the edges of $G$
contains an
\emph{ordered}
monochromatic
copy of $H$.
When the graph $H$ is equipped with a total ordering,
the 
\defx{ordered Ramsey number}
$\orm(H)$
    can be defined analogously,
    as follows:
\begin{equation*}
    \orm(H)
    :=
    \inf\set{
        n \in \bbn : 
        \text{there exists a graph } 
        G
        \text{ of order $n$ such that }
        G \orms H
    }.
\end{equation*}

The following is a general bound for the ordered Ramsey
number of graph, depending on the Ramsey number of its
corresponding unordered graph.
In particular, this proves that the ordered Ramsey number
of an ordered graph is always finite.

\begin{theorem}[Conlon, Fox, Lee, and Sudakov~\cite{conlon_ordered}]
    \label{thm:ord_rms}
    There exists a constant $c$ such that,
    for every ordered graph $H$ on $n$ vertices,
    we have
    \begin{equation*}
        \orm(H) \leq R(H)^{c\log^2n}.
    \end{equation*}
\end{theorem}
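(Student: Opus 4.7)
My plan is to prove this result by combining Ramsey's theorem with a careful ordering-preservation argument, using the \emph{interval chromatic number} of an ordered graph as a bridge between the unordered and fully ordered settings. Define $\chi_<(H)$ to be the smallest $k$ such that the linearly ordered vertex set of $H$ can be partitioned into intervals $I_1 < I_2 < \cdots < I_k$ each of which is independent in $H$. Trivially $\chi_<(H) \leq n$, and graphs with small $\chi_<$ are essentially ordered $k$-partite graphs, which are much easier to embed.

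The first step is to prove a bound of the shape $\orm(H) \leq R(H)^{c \log n \cdot \log \chi_<(H)}$. Given a two-coloring of an ordered complete graph $K_N$ with $N$ sufficiently large, I would partition $[N]$ into blocks whose relative sizes mirror the optimal interval decomposition of $H$. Within each block I would apply classical Ramsey (paying a factor of roughly $R(H)$) to extract a monochromatic clique, and then build an embedding of $H$ greedily so that the vertices of each $I_i$ land inside the corresponding block. Handling the intervals by a divide-and-conquer strategy that at each level halves the number of intervals to be simultaneously controlled is what produces the $\log \chi_<(H)$ factor in the exponent.

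The second ingredient is an iterated extraction, in the spirit of Erd\H{o}s--Szekeres, that converts the requirement of ``compatible monochromatic cliques across blocks'' into a nested recursion of depth $O(\log n)$. Rather than choosing all the block cliques independently, I would peel them off one at a time, at each stage shrinking a reservoir of candidate vertices on which the coloring pattern to the previously chosen vertices is uniform. This contributes the second $\log n$ factor, and plugging the trivial estimate $\chi_<(H) \leq n$ into the product gives the announced bound $R(H)^{c\log^2 n}$.

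The hard part will be the interval-merging step: ensuring that the monochromatic cliques found in separate blocks can be glued along the edges of $H$ between intervals while simultaneously respecting the ordering and keeping a single color class throughout the embedding. To survive this, the cliques built inside each block must carry enough ``flexibility'' in their external connections, which seems to force either a dependent-random-choice-style argument or a carefully tuned recursive partitioning. In fact, it is precisely this gluing that costs the extra $\log n$ factor pushing the bound from a hoped-for $R(H)^{c\log n}$ up to $R(H)^{c\log^2 n}$, so I do not see an obvious way to avoid it without a genuinely new idea.
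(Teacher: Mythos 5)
First, note that the paper you are reading does not prove Theorem~\ref{thm:ord_rms}; it is quoted as an external result of Conlon, Fox, Lee, and Sudakov, so there is no internal proof to compare your attempt against. Relative to the published argument, your high-level skeleton is correct: introduce the interval chromatic number $\chi_{<}(H)$, prove an intermediate bound of the shape $R_{<}(H) \le R(H)^{O(\log n \cdot \log \chi_{<}(H))}$ by divide-and-conquer on the intervals, and plug in $\chi_{<}(H) \le n$ to obtain the stated $\log^2 n$ exponent.

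The gap is in the mechanism inside the recursion. Each interval $I_i$ of $H$ is independent, so there is nothing of $H$ to embed \emph{inside} a block; every edge of $H$ straddles two distinct blocks. Extracting a monochromatic clique inside block $i$ by classical Ramsey controls only the within-block colouring and says nothing about how those clique vertices are coloured towards block $j$: the bipartite pattern between two monochromatic cliques living in different blocks can still be arbitrary. So ``pay $R(H)$ per block, take a monochromatic clique, glue'' does not go through, and you correctly flag the gluing (``the interval-merging step'') as the obstacle without offering a concrete fix. The known proof sidesteps this by recursing on a \emph{split} of the interval set rather than processing blocks one at a time: at each level it restricts to a large vertex set on which the bipartite colouring between the blocks assigned to the left half of the intervals and those assigned to the right half is controlled, pays a Ramsey-type factor of order $R(H)^{O(\log n)}$ at each of the $O(\log \chi_{<}(H))$ halving levels, and then recurses independently on the two sides. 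Until you replace within-block clique extraction by some concrete control of the colouring \emph{across} the recursive split, your sketch does not close.
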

More precise bounds for
$R_<(H)$
for specific classes of ordered graphs
can be found in
Conlon, Fox, Lee, and Sudakov~\cite{conlon_ordered}
and
Balko, 
Cibulka,
Kr{\'{a}}l 
and
Kyn\v{c}l~\cite{balko_ordered}.

\subsection{Our bounds}

We now give a bound for
the oriented Ramsey number of $\dgh$
depending on the Ramsey number of 
$H$.
Our proof will be inspired
in the proof of
Theorem 2.1 of~\cite{banakh16:_isomet} 
but, in reality, this idea already appeared
in
Cochand and Duchet~\cite{cochand_duchet}
and
in
R{\"o}dl and Winkler~\cite{rodl_orderings}.

\begin{theorem}
    \label{thm:dir_rms}
    There exists a constant 
    $c$ such that the following holds.
    Let 
    $\dgh$
    be
    an acyclic oriented graph
    with $h$ vertices
    and $H$ its 
    underlying undirected graph.
    There exists 
    orderings
    $<_0$ and $<_1$
    of the vertices of 
    $H$
    such that,
    for
    $H_0 = (H,<_0)$
    and
    $H_1 = (H,<_1)$,
    we have
    \begin{equation*}
        \dr(\dgh) 
        \leq 
        \orm(H_0)
        +
        \orm(H_1)
        \leq
        2R(H)^{c \log^2(h)}.
    \end{equation*}
\end{theorem}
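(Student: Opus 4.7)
The plan is to use two applications of ordered Ramsey, one for each of two orderings of $V(H)$ tailored to $\dgh$. I would take $<_0$ to be any topological ordering of $\dgh$ and $<_1$ its reverse (which is a topological ordering of $\dgh^{rev}$), and set $H_i = (H, <_i)$. Let $G_0$ and $G_1$ be ordered graphs on $\orm(H_0)$ and $\orm(H_1)$ vertices respectively, witnessing $G_i \orms H_i$, and take $G$ to be a graph on $V(G_0) \sqcup V(G_1)$ — with some edges between the two sides as well, if needed — so that $|V(G)| = \orm(H_0) + \orm(H_1)$.

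Given any orientation $\dgg$ of $G$, I would define on each part $G_i$ the $2$-coloring in which an edge is red when its orientation in $\dgg$ agrees with the ordering of $G_i$ and blue otherwise. By $G_i \orms H_i$, each restricted orientation then contains a monochromatic ordered copy of $H_i$. The decisive translation is that, since $<_0$ is topological for $\dgh$, a red monochromatic $H_0$-copy in the orientation of $G_0$ is precisely an embedding of $\dgh$, while a blue one embeds $\dgh^{rev}$; dually, since $<_1$ is topological for $\dgh^{rev}$, a blue $H_1$-copy in the orientation of $G_1$ embeds $\dgh$ and a red one embeds $\dgh^{rev}$. Hence, whenever the coloring on $G_0$ yields a red $H_0$-copy or the coloring on $G_1$ yields a blue $H_1$-copy, the desired $\dgh$-embedding is immediately produced.

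The hard part will be the residual case, in which the coloring of $G_0$ admits only blue $H_0$-copies and that of $G_1$ only red $H_1$-copies; then both monochromatic copies yield $\dgh^{rev}$-embeddings $\phi \colon V(H) \to V(G_0)$ and $\psi \colon V(H) \to V(G_1)$ and no direct $\dgh$-embedding. To handle it, I would follow the strategy of Cochand and Duchet~\cite{cochand_duchet} and of R\"odl and Winkler~\cite{rodl_orderings}, adding just enough extra structure to $G$ (for instance, suitable cross-edges between $V(G_0)$ and $V(G_1)$) that this case is still forced to produce a $\dgh$-copy, via a pigeonhole argument on the orientations of the new edges combined with the two $\dgh^{rev}$-embeddings $\phi$ and $\psi$. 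Carrying this out while keeping the total vertex count equal to $\orm(H_0) + \orm(H_1)$ is the technical heart of the argument. Finally, the second inequality $\orm(H_0) + \orm(H_1) \leq 2R(H)^{c\log^2 h}$ is an immediate consequence of Theorem~\ref{thm:ord_rms} applied to each $H_i$, which has $h$ vertices.
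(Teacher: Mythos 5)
Your proposal correctly identifies the two orderings (a topological ordering $<_0$ of $\dgh$ and its reverse $<_1$) and the translation between edge colors and edge orientations; these are exactly the ingredients of the paper's proof, and the second inequality is indeed an immediate application of Theorem~\ref{thm:ord_rms}. However, the residual case you flag is a genuine gap, and it is the crux of the argument, not a technicality. Running two \emph{independent} applications of ordered Ramsey, one inside $G_0$ and one inside $G_1$, gives you no control over which color the two monochromatic copies come out in: nothing prevents an adversarial orientation from producing only blue $H_0$-copies on one side and only red $H_1$-copies on the other, so that both decode to $\dgh^{\mathrm{rev}}$. Your plan of adding cross-edges between $V(G_0)$ and $V(G_1)$ and resolving the bad case by pigeonhole is not spelled out, and it is far from clear that it can be carried out at all, let alone within the vertex budget $\orm(H_0)+\orm(H_1)$.

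The paper avoids the residual case by never splitting the host. It forms the acyclic oriented graph $\dgf$ on $V(\dgh)\times\{0,1\}$ whose $0$-part is $\dgh$ and whose $1$-part is $\dgh$ with all arcs reversed, fixes a single topological ordering $<$ of the whole of $\dgf$ (whose restrictions to the two parts are precisely your $<_0$ and $<_1$), and lets $F$ be the corresponding ordered undirected graph on $2h$ vertices. It then applies ordered Ramsey \emph{once}, to $F$, inside a single ordered complete graph $K_N$ with the orientation-induced coloring. The key point is that a monochromatic ordered copy of $F$ automatically yields monochromatic copies of $H_0$ and $H_1$ \emph{of the same color}: if that color is ``along the order,'' the $H_0$-part of the copy already embeds $\dgh$; if it is ``against the order,'' the $H_1$-part does. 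Your residual case simply cannot arise. Packaging $H_0$ and $H_1$ into a single ordered graph $F$ and extracting one monochromatic copy of $F$ is the idea missing from your argument.

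One further caution: when you reconcile your write-up with the paper's, look closely at the step in which the paper passes from $\orm(F)$ to $\orm(H_0)+\orm(H_1)$ and declares it clear. Already when $\dgh$ is a single arc one has $\orm(H_0)=\orm(H_1)=2$, while $F$ is a pair of vertex-disjoint ordered edges and $\orm(F)>4$ (color $\{1,2\}$ red and $\{3,4\}$ blue in $K_4$). The final bound $2R(H)^{c\log^2 h}$ survives, since Theorem~\ref{thm:ord_rms} can be applied to $F$ directly and the constant $c$ absorbs the difference, but the middle inequality as stated deserves a more careful justification than ``clearly.''
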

\begin{proof}
    Let $\dgf$ be the oriented graph
    formed by two disjoint copies of $\dgh$,
    in which one has reversed edges.
    More formally,
    let $\dgf$ be the oriented graph with vertex set 
    \begin{equation*}
        V(\dgf):= V(\dgh) \times \set{0,1}
    \end{equation*}
    and
    edge set
    \begin{equation*}
        E(\dgf) := 
        \set{ 
            \( (u,0),(v,0) \), \( (v,1),(u,1) \)
        : (u,v) \in E(\dgh)}.
    \end{equation*}
    Since $\dgh$ is acyclic,
    the oriented graph $\dgf$ is also acyclic.
    Therefore, there exists an ordering~$<$
    of the vertices of
    $\dgf$ such that
    $u < v$
    if 
    $(u,v) \in E(\dgf)$.
    Let $F$ 
    be the (ordered) underlying undirected graph of
    $\dgf$ equipped with the ordering $<$.
    Let $<_0$
    be an ordering of the vertices of $H$
    such that,
    for $x, y \in V(H)$,
    we have
    $x <_0 y$
    if and only if
    $(x,0) < (y,0)$.
    Define $<_1$ analogously.
    Let 
    $H_0 := (H,<_0)$
    and
    $H_1 := (H,<_1)$.
    Clearly, we have 
    $$\orm(F) \leq \orm(H_0) + \orm(H_1).$$

    Let $\ls$ be an arbitrary ordering of the vertices of
    $K_N$. We thus consider $K_N$ to be an ordered
    complete graph.
    By Theorem~\ref{thm:ord_rms},
    there exists a number $N$ such that
    $K_N \orms F$
    and
    \begin{equation*}
        N 
        =
        \orm(F)
        \leq 
        \orm(H_0) + \orm(H_1)
        \leq
        2R(H)^{c \log^2(h)}.
    \end{equation*}

    Now it suffices to prove that
    $K_N \rms \dgh$.
    Let $\dgk$ be an arbitrary orientation of $K_N$.
    Color the edges of $K_N$ in the following way:
    an edge $\set{u,v} \in E(K_N)$ with $u \ls v$
    is colored blue if $(u,v) \in E(\dgk)$ and red
    otherwise.
    By the choice of $N$,
    there exists
    an ordered monochromatic copy of $F$ in $K_N$.
    Let 
    $\phi:V(F) \to V(K_N)$
    be the monotone embedding of this copy.
    If the copy of $F$ in $K_N$ is blue,
    then
    the set of vertices
    $\set{\phi((v,0)) : v \in V(\dgh)}$
    induces a directed copy of $\dgh$ in $\dgk$
    with the color blue.
    Otherwise,
    if the copy is red,
    then
    the set of vertices
    $\set{\phi((v,1)) : v \in V(\dgh)}$
    induces a copy
    with the color red.
    In either case we have proved $K_N \rms \dgh$, as
    desired.
\end{proof}

\begin{remark}
    The proof of
    Theorem~\ref{thm:dir_rms}
    shows that
    the orderings 
    $<_0$
    and
    $<_1$
    of
    $V(\dgh)$
    can be taken
    to be
    the topological ordering of $\dgh$
    and the reverse topological ordering of $\dgh$,
    respectively.
\end{remark}

\section{A container theorem for digraphs}
\label{sec:containers}

In preparation for the results of
Section~\ref{sec:oriented_random}
and
Section~\ref{sec:isometric},
we prove
a container lemma for digraphs and some supporting lemmas
that will be useful in both sections.

\subsection{A saturation result for oriented graphs}
\label{sec:saturation}

First we need to
prove 
a saturation result.

\begin{theorem}
    \label{thm:dir_rms_quant}
    Let $\dgh$ be an acyclic oriented graph on $h$ vertices,
    and let $R := \dr(\dgh)$.
    The following holds for every $n \geq R$.
    For every $F \sseq E(K_n)$,
    if there exists an orientation $\dgf$ of $F$ such that
    $\dgf$ has at most 
    $\big(2\binom{R}{h}\big)^{-1} \cdot \binom{n}{h}$ 
    copies of $\dgh$, then
    \begin{equation*}
        \abs{E(K_n) \sm F} 
        \geq 
        (1/2R^2) \cdot n^2.
    \end{equation*}
\end{theorem}
\begin{proof}
    For convenience,
    let 
    $\eps := \big(2\binom{R}{h}\big)^{-1}$.
    Let $F \sseq E(K_n)$ 
    be such that there exists an orientation
    $\dgf$ 
    of 
    $F$ 
    with at most 
    $\eps \binom{n}{h}$ 
    copies of
    $\dgh$.
    Let $\dgk$ be an orientation of $K_n$ which agrees
    with the orientation $\dgf$ of $F$.
    Let
    \begin{equation*}
        \cals := 
        \set{S \in \binom{V(\dgk)}{R} : 
    E(\dgk[S]) \sseq \dgf}.
    \end{equation*}
        That is,
        the family 
        $\cals$
        is the collection of
        all
        $R$-element subsets $S$ of $V(\dgk)$
        such that
        every edge of $\dgk[S]$
        is contained in $\dgf$.
    By definition of $R$,
    every 
    $R$-element
    subset of
    the vertices of 
    $\dgk$
    contains 
    at least one
    copy
    of $\dgh$.
    This means that, for every $S \in \cals$,
    there exists one copy of $\dgh$
    in
    $E(\dgk[S])$.
    Moreover, 
    every copy of 
    $\dgh$ in $\dgk$
    is contained in at most
    $\binom{n-h}{R-h}$
    $R$-element subsets.
    Therefore,
    double-counting on the pairs
    $(S, \dgh')$
    where
    $S \in \cals$
    and
    $\dgh'$ is a copy of $\dgh$
    contained in $S$
    yields
    \begin{equation*}
        \abs{\cals}
        \leq
        \eps 
        \binom{n-h}{R-h}
        \binom{n}{h}
        =
        \frac{1}{2}
        \frac{\binom{n-h}{R-h}}{\binom{R}{h}}
        \binom{n}{h}
        =
        \frac{1}{2}
        \binom{n}{R}.
    \end{equation*}
    This implies that the set
    $\ovl{\cals}$ defined as
    \begin{equation*}
        \ovl{\cals}
        := 
        \binom{V(\dgk)}{R} \sm 
        \cals 
    \end{equation*}
    satisfies
    $\abs{\ovl{\cals}} \geq (1/2)\binom{n}{R}$.
    Observe that, by definition of
    $\ovl{\cals}$,
    every set
    $S \in 
    \ovl{\cals}$
    induces at least one edge
    $e \in E(\dgk) \sm \dgf$.
    Moreover, every edge
    $e \in E(\dgk) \sm \dgf$
    is contained in at most
    $\binom{n-2}{R-2}$
    $R$-element subsets.
    Now, double-counting on the pairs
    $(S,e)$
    where 
    $S \in 
    \ovl{\cals}$
    and
    $e \in E(\dgk[S])$
    we get
    \begin{equation*}
        \abs{E(\dgk) \sm \dgf}
        \geq
        \frac{\abs{\ovl{\cals}}}{\binom{n-2}{R-2}}
        \geq
        \frac{1}{2}
        \frac {\binom{n}{R}}
              {\binom{n-2}{R-2}}
        >
        \frac{1}{2R^2}
        \cdot
        n^2.
    \end{equation*}
    The desired result now follows by observing
    $\abs{E(K_n) \sm F} 
    =
    \abs{E(\dgk) \sm \dgf}$.
\end{proof}

\subsection{The general container lemma}

Let $\calh$ be a $l$-uniform hypergraph.
For a set $J \sseq V(\calh)$, we define the
\defx{degree of $J$}
by
\begin{equation*}
    d(J) := \abs{e \in E(\calh) : J \sseq e}.
\end{equation*}
For a vertex 
$v \in V(\calh)$,
we let 
$d(v) := d(\set{v})$.
For $j \in [l]$, we also define the
\defx{maximum $j$-degree} of a vertex
$v \in V(\calh)$ by
\begin{equation*}
    d^{(j)}(v) :=
    \max\set{d(J) : v \in J \in \binom{V(\calh)}{j}}.
\end{equation*}
We denote the average of $d^{(j)}(v)$ for all $v \in V(\calh)$
by
\begin{equation*}
    d_j := 
    \frac{1}{v(\calh)}
    \sum_{v \in V(\calh)}d^{(j)}(v).
\end{equation*}
Note
that
$d_1$
is the average degree of $\calh$.
Finally, for $\tau > 0$,
we define 
$\delta_j$ 
as
\begin{equation*}
    \delta_j := \frac{d_j}{d_1 \tau^{j-1}}
\end{equation*}
and
the
\defx{co-degree function}
$\delta(\calh,\tau)$
by
\begin{equation*}
    \delta(\calh,\tau) := 
    2^{\binom{l}{2}-1}
    \sum_{j=2}^l
    {2^{-\binom{j-1}{2}}}
    \delta_j.
\end{equation*}

We now state a condensed version of the Container Lemma,
as expressed in
Saxton and Thomason~\cite{saxton_containers}. 
This version
can be found
as Theorem 2.1 in~\cite{han_ramsey_containers}.

\begin{theorem}[\cite{saxton_containers}, Corollary 3.6]
    \label{thm:container}
    Let $0< \eps, \tau < 1/2$.
    Let 
    $\calh = (V,E)$ 
    be a $l$-uniform hypergraph.
    Suppose that $\tau$ satisfies
    $\delta(\calh,\tau) \leq \eps/(12l!)$.
    Then for integers
    $K = 800l(l!)^3$
    and
    $s = \floor{K\log(1/\eps)}$
    the following holds.

    For every independent set 
    $I \sseq V$ in $\calh$
    there exists an $s$-tuple
    $T = (T_1,\dots,T_s)$
    of subsets of $V$
    and a subset
    $C = C(T) \sseq V$ depending only on $S$
    such that
    \begin{enumerate}[label=(\alph*)]
        \item 
            $\bigcup_{i \in [s]} T_i \sseq I \sseq C$,
        \item 
            $e(C) \leq \eps \cdot e(\calh)$, and
        \item 
            for every $i \in [s]$ we have 
            $\abs{T_i} \leq K\tau\abs{V}$.
    \end{enumerate}
\end{theorem}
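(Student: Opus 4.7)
The plan is to follow the Saxton--Thomason algorithmic approach: for each independent set $I \subseteq V$, I would build the tuple $T = (T_1, \dots, T_s)$ and a corresponding container $C = C(T)$ simultaneously through a deterministic iterative procedure, arranged so that $C$ depends only on $T$ and on $\calh$, not on the rest of~$I$.

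First I would fix a deterministic total order on $V$ (say, by $d^{(1)}$-degree, breaking ties arbitrarily) and initialize the ``available set'' $A_0 := V$. At step $i \geq 1$, if $\calh[A_{i-1}]$ already has at most $\eps \cdot e(\calh)$ edges, stop and output $C := A_{i-1} \cup T_1 \cup \cdots \cup T_{i-1}$. Otherwise, construct $T_i \subseteq I \cap A_{i-1}$ greedily by scanning the vertices in the fixed order and adding them one at a time, up to a budget of $K\tau\abs{V}$. After each addition of a vertex $v$, update the available set by removing every $u$ such that, for some $J \subseteq T_i$ of size $j-1$ and some $j \in \{2,\dots,l\}$, the co-degree $d(J \cup \{u\})$ exceeds a threshold of the form $\tau^{j-1} d^{(j)}(v)$. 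Since this update depends only on $T_i$ and on $\calh$, the procedure can be re-executed from $T$ alone, which delivers conclusion~(a) and the ``$C$ depends only on $T$'' claim.

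The quantitative heart of the argument is to show that one full iteration contracts $e(\calh[A_i])$ by a constant factor. The Saxton--Thomason computation bounds the expected number of surviving edges by a weighted sum over the levels $j \in \{2, \dots, l\}$, in which the contribution of level $j$ is controlled by the factor $2^{-\binom{j-1}{2}} \delta_j$; the full sum is precisely $\delta(\calh,\tau)$, which by hypothesis is at most $\eps/12l!$. Iterating this contraction for $s = \floor{K\log(1/\eps)}$ rounds reduces the edge count to at most $\eps \cdot e(\calh)$, yielding~(b), while (c) is immediate from the size budget imposed on each~$T_i$.

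The main obstacle will be the entropy-style inequality that converts the co-degree bound into the per-iteration shrinking factor: one must average carefully over the greedy choice of $T_i$ and induct on $j$ to aggregate the contributions from all levels into the single estimate $\delta(\calh,\tau)$. A secondary but delicate point is to verify that the reconstruction from $T$ alone produces exactly the same container $C$ as the forward execution, which hinges on the removal rule at step $i$ depending only on the already-committed fingerprint pieces $T_1,\dots,T_i$ and never on the unused part of~$I$.
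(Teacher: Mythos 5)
The paper does not prove this statement: it is imported verbatim as an external black box, attributed to Saxton and Thomason (Corollary 3.6 of~\cite{saxton_containers}) and, in the exact form stated, to Theorem~2.1 of~\cite{han_ramsey_containers}. There is therefore no ``paper's own proof'' to compare your attempt against; the appropriate thing to supply here is a citation, not a reconstruction.

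That said, your sketch is recognisably in the spirit of the Saxton--Thomason online-container construction, and it correctly names the three structural ingredients: a deterministic greedy build of the fingerprint $T$ from $I$; a reconstruction rule that recovers $C$ from $T$ (and $\calh$) alone, giving item~(a) and the ``$C$ depends only on $T$'' clause; and a contraction of the surviving edge count governed by the co-degree function $\delta(\calh,\tau)$. But as you yourself note, the entire technical substance of the theorem lies precisely in the step you leave open: turning the bound $\delta(\calh,\tau)\le\eps/12l!$ into an actual per-round contraction. That is where the degree-measure bookkeeping, the treatment of exceptional high-codegree vertices, and the source of the $2^{-\binom{j-1}{2}}$ weights all live, and none of it is supplied. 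Some details you do give are also not faithful to the actual algorithm: the available set is not pruned by simply thresholding $d(J\cup\{u\})$ against $\tau^{j-1}d^{(j)}(v)$, a single pass is terminated by a vertex budget rather than by an edge-count test, and the $\eps\cdot e(\calh)$ guarantee in~(b) arises from iterating the one-pass theorem $s=\lfloor K\log(1/\eps)\rfloor$ times, which is exactly where the logarithmic factor in $s$ comes from. So the proposal is a reasonable road map but, taken on its own, it has a genuine gap at the heart of the argument.
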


Here we prove a version of the container lemma
for
$\dgh$-free
orientations of graphs.
First, we need the following definitions.

\begin{definition}
    Let $\dgh$ be an oriented graph
    and let $n \in \bbn$.
    Denote by
    $\dgd_n$
    the digraph
    with vertex set
    $[n]$
    and
    edge set
    \begin{equation*}
        E(\dgd_n)
        := 
        ([n] \times [n])
        \sm
        \set{(v,v) : v \in [n]}.
    \end{equation*}
    We call $\dgd_n$ the
    \emph{complete digraph}.
\end{definition}

\begin{definition}[\cite{kuhn_directed}, Definition 3.5]
    Let $\dgh$ be an oriented graph
    with $l$ edges
    and let $n \in \bbn$.
    The 
    hypergraph 
    $\cald(n, \dgh) = (\calv, \cale)$ 
    is a $l$-uniform hypergraph
    with vertex set
    $\calv := E(\dgd_n)$
    and edge set
    \begin{equation*}
        \cale :=
        \set{B \in \binom{\calv}{l} :
        \text{the edges of $B$ form a digraph isomorphic to
        $\dgh$}
    }.
\end{equation*}
\end{definition}

\begin{definition}
    Let $\dgh$ be an oriented graph
    with $h$ vertices.
    In what follows,
    we denote by
    $\emb_{\dgh} := e(\cald(h, \dgh))$
    the number of copies of $\dgh$
    in $\dgd_h$.
\end{definition}

\subsection{Checking degree conditions}
To apply Theorem~\ref{thm:container}
and prove our container theorems for digraphs,
it is first necessary to prove a bound
on 
$\delta(\cald(n, \dgh), \tau)$ 
for a suitable value of
$\tau$.
This is done by the following lemma.

\begin{lemma}
    \label{lemma:deg_cont}
    Let $\dgh$ be an oriented graph
    with $h$ vertices
    and $l \geq 2$ edges.
    Let also $D_\tau \geq 1$
    and
    write
    $\tau := D_\tau n^{-1/m_2(\dgh)}$.
    We have
    \begin{equation*}
        \delta(\cald(n, \dgh), \tau)
        \leq
        2^{\binom{l}{2}}
        h^{h-2}
        D_\tau^{-1}.
    \end{equation*}
\end{lemma}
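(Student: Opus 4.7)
The plan is to bound each $\delta_j = d_j/(d_1\tau^{j-1})$ for $j \in \{2,\ldots,l\}$ individually and combine them through $\delta(\cald(n,\dgh),\tau) = 2^{\binom{l}{2}-1}\sum_{j=2}^l 2^{-\binom{j-1}{2}}\delta_j$. For the average degree, double counting using~(\ref{eq:copies_dgh}) together with $v(\cald) = n(n-1)$ yields $d_1 = l\,\emb_\dgh(n-2)(n-3)\cdots(n-h+1)/h!$. For $d^{(j)}(v)$, any $j$-subset $J$ of directed edges spans a sub-digraph $\dgf$ on $v' = v(\dgf)$ vertices; every copy of $\dgh$ containing $J$ lives on $V(\dgf)$ plus $h-v'$ new vertices chosen from $[n] \setminus V(\dgf)$, and on each such $h$-vertex set there are at most $\emb_\dgh$ copies of $\dgh$. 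This gives the crude bound $d(J) \leq \emb_\dgh\binom{n-v'}{h-v'}$, hence $d_j \leq \emb_\dgh\binom{n-v'_{\min}}{h-v'_{\min}}$, where $v'_{\min}$ is the minimum value of $v(\dgf)$ over $j$-edge sub-digraphs of $\dgh$ (unrealisable sub-digraphs contribute $0$).

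The essential structural input is the $2$-density bound: since any $\dgf \subseteq \dgh$ with $j$ edges and $v(\dgf) \geq 3$ satisfies $(j-1)/(v(\dgf)-2) \leq m_2(\dgh)$, one has $v'_{\min} \geq 2 + (j-1)/m_2(\dgh)$. Combining the above, after cancellation between $\binom{n-v'_{\min}}{h-v'_{\min}}$ and $(n-2)(n-3)\cdots(n-h+1)$, the ratio $d_j/d_1$ reduces to $h!/[(h-v'_{\min})!\,l\,(n-2)(n-3)\cdots(n-v'_{\min}+1)]$. Multiplying by $1/\tau^{j-1} = D_\tau^{-(j-1)}n^{(j-1)/m_2(\dgh)}$, the factor $n^{(j-1)/m_2(\dgh)}$ is exactly absorbed by the denominator $(n-2)\cdots(n-v'_{\min}+1) \sim n^{v'_{\min}-2}$ thanks to the $2$-density bound, yielding $\delta_j \leq h!/[(h-v'_{\min})!\,l\,D_\tau^{j-1}] \leq h!/(l\,D_\tau^{j-1})$ for $n$ sufficiently large relative to $h$.

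Finally, the elementary inequality $h! \leq l\,h^{h-2}$ (valid for all $l \geq 2$ and $h \geq 3$, tight when $(h,l) = (3,2)$ and strict for larger $h$ since $h!/h^{h-2} \leq 2$ by Stirling) together with $D_\tau \geq 1$ and $j \geq 2$ gives $\delta_j \leq h^{h-2}/D_\tau$. Substituting into the weighted sum and using $\sum_{j \geq 2} 2^{-\binom{j-1}{2}} < 2$ yields the claimed bound $2^{\binom{l}{2}}h^{h-2}D_\tau^{-1}$. The main obstacle is the careful $n$-bookkeeping that makes the asymptotic $(n-2)\cdots(n-v'_{\min}+1) \sim n^{v'_{\min}-2}$ absorb $n^{(j-1)/m_2(\dgh)}$ without extraneous $h$-dependent factors, and extracting the sharp constant $h^{h-2}$ rather than a weaker bound (like $h^h$ or $h!$) through the inequality $h! \leq l\,h^{h-2}$.
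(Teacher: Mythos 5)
Your overall strategy — bound each $\delta_j$ through the $2$-density inequality $(j-1)/m_2(\dgh) \leq f(j)-2$ and sum — matches the paper's, and the computation $d_1 = l\,\emb_\dgh(n-2)\cdots(n-h+1)/h!$ is correct. However, there is a genuine gap in the step where you claim that $n^{(j-1)/m_2(\dgh)}$ is ``exactly absorbed'' by $(n-2)(n-3)\cdots(n-v'_{\min}+1)$. You need
\begin{equation*}
    \frac{n^{(j-1)/m_2(\dgh)}}{(n-2)(n-3)\cdots(n-v'_{\min}+1)} \leq 1,
\end{equation*}
but since each factor in the denominator is strictly less than $n$, we have $(n-2)\cdots(n-v'_{\min}+1) < n^{v'_{\min}-2}$. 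When the $2$-density bound is tight for some $j$, i.e.\ $(j-1)/m_2(\dgh) = v'_{\min}-2$ (which always happens for the extremal subgraph realizing $m_2$), the ratio above is strictly greater than $1$ for every $n$, not just small $n$. So the ``for $n$ sufficiently large'' caveat does not rescue the argument; it fails for all $n$ at the critical $j$.

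The structural source of the difficulty is that your bound $d(J) \leq \emb_\dgh\binom{n-v'}{h-v'}$ is looser than what is needed. The paper instead uses $\emb_\dgh(J) \leq \emb_\dgh(\{e\})$ for any single edge $e \in J$, where $\emb_\dgh(\{e\}) = l\,\emb_\dgh/(h(h-1))$; this tighter count makes the ratio $d(J)/d_1$ collapse to
\begin{equation*}
    \frac{(h-2)(h-3)\cdots(h-f(j)+1)}{(n-2)(n-3)\cdots(n-f(j)+1)},
\end{equation*}
which has \emph{equally many} factors on top and bottom, and then each pair satisfies $\frac{h-i}{n-i} \leq \frac{h}{n}$ for $i \geq 2$ and $h \leq n$, giving a clean $(h/n)^{f(j)-2}$ bound valid for all $n \geq h$. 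Your numerator $h(h-1)(h-2)\cdots(h-v'_{\min}+1)/l$ carries two extra factors $h(h-1)/l$ with no $n$-factors to pair them against; the inequality $h! \leq l\,h^{h-2}$ is a nice observation, but it only controls the $h$-side and cannot repair the missing $n$-side pairing. If you replace $\emb_\dgh(J) \leq \emb_\dgh$ by $\emb_\dgh(J) \leq \emb_\dgh(\{e\})$, the argument closes cleanly without any asymptotic condition on $n$.
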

\begin{proof}
    For convenience,
    set
    $\calh :=
    \cald(n, \dgh)$.
    Let
    $J \sseq V(\calh)$.
    Define
    \begin{equation*}
        V_J
        :=
        \bigcup_{(a,b) \in J}
        \set{a,b} \sseq [n].
    \end{equation*}
    Note that
    $(V_J,J)$
    is the subdigraph of
    $\dgd_n$
    induced by the set 
    of edges
    $J$.
    For a 
    set 
    $S \sseq [n] \sm V_J$
    such that
    $\abs{S} = h-\abs{V_J}$,
    let
    $\emb_{\dgh}(J, S)$
    denote the number
    of copies 
    $\dgf$
    of
    $\dgh$
    such that
    $V(\dgf) = V_J \cup S$
    and
    $J \sseq E(\dgf)$.
    Since
    $\emb_{\dgh}(J,S)$ 
    is the same number
    for any choice of $S$ as above,
    we write
    $\emb_{\dgh}(J)$
    for
    $\emb_{\dgh}(J,S)$.

    Note that
    $d(J)$
    is the number of copies
    of
    $\dgh$
    in
    $\dgd_n$
    which contain
    the set $J$.
    Observe now that
    \begin{equation}
        \label{eq:deg_dj}
        d(J)
        =
        \binom{n-\abs{V_J}}{h-\abs{V_J}}\emb_{\dgh}(J).
    \end{equation}
    For every $j \in [l]$,
    let
    \begin{equation}
            \label{eq:def_fj}
        f(j) 
        := 
        \min_{\dgh' \sseq \dgh,\, e(\dgh')=j}
        v(H').
    \end{equation}
    It follows
    from~(\ref{eq:deg_dj})
    that
    \begin{equation*}
        d(J)
        =
        \binom{n-\abs{V_J}}{h-\abs{V_J}}\emb_{\dgh}(J)
        \leq
        \binom{n-f(j)}{h-f(j)}\emb_{\dgh}g(J).
    \end{equation*}
    Note now that,
    for every 
    $e \in V(\calh)$,
    we have
    $d^{(1)}(e) = d(e) =
    \binom{n-2}{h-2}\emb_{\dgh}g(\set{e})$.
    Therefore, the average $d_1$ of all 
    $d^{(1)}(e)$
    satisfies
    $d_1 = 
    \binom{n-2}{h-2}\emb_{\dgh}g(\set{e})$,
    for some fixed $e \in V(\calh)$.
    It follows that
    \begin{align*}
        \frac{d(J)}{d_1}
        \leq
        \frac{\binom{n-f(j)}{h-f(j)}\emb_{\dgh}g(J)}
        {\binom{n-2}{h-2}\emb_{\dgh}g(\set{e})}
        \leq
        \frac{\binom{n-f(j)}{h-f(j)}}
        {\binom{n-2}{h-2}}
        =
        \frac{(h-2)(h-3)\dots(h-f(j)+1)}
        {(n-2)(n-3)\dots(n-f(j)+1)}
        \leq
        \( \frac{h}{n} \)^{f(j)-2}.
    \end{align*}
    Therefore, we have
    $d^{(j)}(v)/d_1 \leq h^{f(j)-2}n^{2-f(j)}$.
    Since $f(j) \leq h$, this gives us
    \begin{equation*}
        \frac{d_j}{d_1}
        =
        \frac{1}{v(\calh)}
        \sum_{v \in V(\calh)}
        \frac{d^{(j)}(v)}{d_1}
        \leq
        \frac{1}{v(\calh)}
        \sum_{v \in V(\calh)}
        h^{f(j)-2}n^{2-f(j)}
        =
        h^{f(j)-2}n^{2-f(j)}
        \leq
        h^{h-2}n^{2-f(j)}.
    \end{equation*}
    We furthermore obtain 
    \begin{align}
        \label{eq:dgc1}
        \delta_j
        =
        \frac{d_j}{d_1 \tau^{j-1}}
        \leq
        h^{h-2}n^{2-f(j)}
        \tau^{1-j}
        \leq
        h^{h-2}
        n^{2-f(j)+(j-1)/m_2(\dgh)}
        D_\tau^{1-j}.
    \end{align}
    Observe now that, by definition of $m_2(\dgh)$,
    we have
    $m_2(\dgh) \geq (j-1)/(f(j)-2)$.
    From this we may derive
    ${2-f(j)+(j-1)/m_2(\dgh)} \leq 0$.
    Therefore, we can conclude from~(\ref{eq:dgc1})
    that
    \begin{equation}
        \label{eq:dgc2}
        \delta_j
        \leq
        h^{h-2}
        D_\tau^{1-j}
        \leq
        h^{h-2}
        D_\tau^{-1}.
    \end{equation}
    Now we can finally bound the co-degree function 
    $\delta(\calh, \tau)$ by observing that
    \begin{equation*}
        \delta(\calh, \tau)
        =
        2^{\binom{l}{2}-1}
        \sum_{j=2}^l
        {2^{-\binom{j-1}{2}}}
        \delta_j
        \leq
        2^{\binom{l}{2}-1}
        h^{h-2}D_\tau^{-1}
        \sum_{j=2}^l
        2^{-\binom{j-1}{2}}
        \leq
        2^{\binom{l}{2}}
        h^{h-2}D_\tau^{-1}.
    \end{equation*}
    This finishes the proof.
\end{proof}

\subsection{A container lemma for graphs with $\dgh$-free orientations}

    For convenience,
    given numbers
    $n$, $s$ and $t$,
    define
    \begin{equation*}
        \calt(n,s,t)
        :=
        \set{(T_1,\dots,T_s)
            \in E(K_n)^s
            : 
            \abs{\bigcup_{i \in [s]}T_i} \leq t 
        }.
    \end{equation*}
We are now able to state and prove
our container lemma
for graphs admitting $\dgh$-free orientations.
\begin{theorem}
    \label{thm:container_dir_rms}
    Let
    $\dgh$ 
    be an acyclic oriented graph.
    There exists 
    a real number
    $\alpha > 0$
    and positive integers 
    $n_0$, $s$ and 
    $c$
    such that 
    the following holds
    for every $n \geq n_0$.
    For every graph $G$ on 
    $n$
    vertices
    such that 
    $G \nrms \dgh$,
    there exists
    an $s$-tuple
    $T=(T_1,\dots,T_s) \in E(G)^s$
    and 
    a set
    $C = C(T) \sseq E(K_n)$
    depending only on 
    $T$
    such that
    \begin{enumerate}[label=(\alph*)]
        \item $\bigcup_{i \in [s]} T_i \sseq E(G) \sseq C$,
        \item $\abs{E(K_n)\sm C} \geq \alpha n^2$, and
        \item
            $T \in \calt(n,s,cn^{2-1/m_2(\dgh)})$.
    \end{enumerate}
\end{theorem}

\begin{proof}
    We will apply the Container Lemma~(Theorem~\ref{thm:container}).
    Let $\dgh$ be an acyclic oriented graph
    with $h$ vertices and $l$ edges and let $R := \dr(\dgh)$.
    Let 
    $\alpha = 1/(2R^2)$.
    Let $n_0 := R$
    and 
    suppose $n \geq n_0$.
    Set 
    $\calh := \cald(n, \dgh)$
    and
    $\eps := (2 \binom{R}{h}  \emb_{\dgh})^{-1}$
    and
    let
    \begin{equation*}
        D_\tau := 
        \frac{12l!2^{\binom{l}{2}}h^{h-2}}{\eps}.
    \end{equation*}
    Moreover, let $\tau := D_\tau n^{-1/m_2(\dgh)}$.
    By
    Lemma~\ref{lemma:deg_cont},
    this yields
    $\delta(\calh, \tau) \leq \eps/(12l!)$.
    Theorem~\ref{thm:container}
    now
    gives us
    numbers
    $s$ and $K$
    for 
    $\calh$, $\eps$ and $\tau$.
    Let $G$ be a graph
    on
    $n$ vertices
    such that
    $G \nrms \dgh$.
    There exists an orientation
    $\dgg$ of $G$ such that
    $\dgg$ contains no copy of
    $\dgh$.
    Therefore,
    the set
    $E(\dgg)$
    is an independent set
    of $\calh$.
    Let
    $\dgt = (\dgt_1,\dots,\dgt_s)$
    be an $s$-tuple
    of oriented edges
    and
    $\dgc = \dgc(\dgt)$
    such as
    Theorem~\ref{thm:container}
    gives for
    $E(\dgg)$.
    For $i \in [s]$,
    let $T_i$ be the 
    underlying
    set of undirected edges
    of
    $\dgt_i$.
    Define $C$ analogously for $\dgc$.
    By item (a) of
    Theorem~\ref{thm:container},
    we have
    \begin{equation*}
        \bigcup_{i \in [s]}T_i \sseq E(G) \sseq C.
    \end{equation*}
    We have thus proved item (a).

    Observe now that
    $\emb_{\dgh}g$
    counts the number of copies
    of
    $\dgh$ 
    in any subset of $h$ vertices
    of
    $\dgd_n$,
    whence
    it follows that
    \begin{equation}
        \label{eq:copies_dgh}
        e(\calh) = \binom{n}{h}\emb_{\dgh}g.
    \end{equation}
    Therefore,
    by item (b) 
    of Theorem~\ref{thm:container}
    we conclude that $\dgc$
    has at most 
    $\eps e(\calh) = (2 \binom{R}{h})^{-1} \cdot \binom{n}{h}$
    copies of $\dgh$.
    By the choice
    of
    $\alpha$,
    Theorem~\ref{thm:dir_rms_quant}
    now gives
    \begin{equation*}
        \abs{E(K_n)\sm C} \geq \alpha n^2.
    \end{equation*}
    We have thus proved item (b).
    Finally, 
    by letting
    $c := sKD_\tau$,
    we get by item (c) of Theorem~\ref{thm:container} that
    \begin{equation*}
        \abs{\bigcup_{i \in [s]}T_i} 
        \leq
        sK\tau v(\calh)
        \leq
        cn^{2-1/m_2(\dgh)}.
    \end{equation*}
    We have thus proved item (c).
    Therefore, there exists
    an $s$-tuple $T$ and a set $C = C(T)$
    with the desired requirements.
    This finishes the proof.
\end{proof}

\section{An Oriented Ramsey Theorem for Random Graphs}
\label{sec:oriented_random}

In this section,
we prove
the following theorem,
applying the results developed in
Section~\ref{sec:containers}.

\begin{theorem}
    \label{thm:random_dir_rms}
    Let $\dgh$ be an acyclic oriented graph.
    There exists a constant $C = C(\dgh)$ such that,
    if 
    $p \geq Cn^{-1/m_2(\dgh)}$, 
    then
    \begin{equation*}
        \lim_{n \to \infty} 
        \bbp\left[G(n,p) \rms \dgh\right] 
        = 1.
    \end{equation*}
\end{theorem}

\begin{proof}
    Let 
    $\alpha$,
    $s, c$
    and
    $n_0$
    be as given by
    Theorem~\ref{thm:container_dir_rms}
    for $\dgh$.
    Suppose $n \geq n_0$
    and let
    $t := cn^{2-1/m_2(\dgh)}$.
    Set moreover
    $p := Cn^{-1/m_2(\dgh)}$,
    for some constant $C$ sufficiently large
    with respect to $c$.
    We will show that 
    $\bbp[G(n,p) \nrms \dgh] = o(1)$.

    If
    a graph
    $G$
    on $n$ vertices
    satisfies
    $G \nrms \dgh$,
    then
    by
    Theorem~\ref{thm:container_dir_rms}
    there exists
    an $s$-tuple 
    $T=(T_1,\dots,T_s) \in \calt(n,s,t)$
    and 
    a set 
    $C(T) \sseq E(K_n)$
    such that
    \begin{equation}
        \label{eq:prob1}
        \bigcup_{i \in [s]} T_i \sseq E(G) \sseq C(T)
    \end{equation}
    and
    \begin{equation*}
        \abs{E(K_n)\sm C(T)}
        \geq
        \alpha n^2.
    \end{equation*}
    Let us set for convenience
    $D(T) := E(K_n) \sm C(T)$.
    Since
    $E(G) \sseq C(T)$,
    we have
    \begin{equation}
        \label{eq:prob2}
        E(G) \cap D(T) = \emptyset.
    \end{equation}

    Let 
    $\calg$
    be the family of all graphs 
    $G$
    on
    $n$ vertices
    such that
    $G \nrms \dgh$.
    For an $s$-tuple 
    $T=(T_1,\dots,T_s) \in \calt(n,s,t)$,
    let
    \begin{equation*}
        \calg_T^{'}
        :=
        \set{G = G^n: T_i \sseq E(G) \; \forall i \in [s]},
    \end{equation*}
    and let
    \begin{equation*}
        \calg_T^{''}
        :=
        \set{G = G^n: E(G) \cap D(T) = \emptyset}.
    \end{equation*}
    Observations 
    (\ref{eq:prob1})
    and
    (\ref{eq:prob2}) 
    show that
    \begin{equation*}
        \calg
        \sseq
        \bigcup_{T \in \calt(n,s,t)}\calg_T^{'} \cap
        \calg_T^{''}.
    \end{equation*}
    As the sets
    $T_i$ and $D(T)$ have empty intersection
    for every $i \in [s]$, 
    it follows that the events
    $[G(n,p) \in \calg_T^{'}]$
    and
    $[G(n,p) \in \calg_T^{''}]$
    are independent.
    We conclude
    \begin{equation*}
        \bbp[G(n,p) \in \calg]
        \leq
        \sum_{T \in \calt(n,s,t)}
        \bbp\left[ G(n,p) \in \calg_T^{'} \right]
        \cdot
        \bbp\left[ G(n,p) \in \calg_T^{''} \right].
    \end{equation*}
    Since $\abs{D(T)} \geq \alpha n^2$
    for every
    $T \in \calt(n,s,t)$,
    we have
    \begin{equation*}
        \bbp\left[ G(n,p) \in \calg_T^{''} \right]
        \leq
        (1-p)^{\alpha n^2}
        \leq
        \exp(-\alpha n^2 p).
    \end{equation*}
    Moreover, we also have
    \begin{equation*}
        \sum_{T \in \calt(n,s,t)}
        \bbp\left[ G(n,p) \in \calg_T^{'} \right]
        \leq
        \sum_{T \in \calt(n,s,t)}
        p^{\abs{\bigcup_{i \in [s]} T_i}}.
    \end{equation*}
    It follows that
    \begin{equation}
        \label{eq:sb}
        \bbp[G(n,p) \in \calg]
        \leq
        \exp(-\alpha n^2 p)
        \cdot
        \sum_{T \in \calt(n,s,t)}
        p^{\abs{\bigcup_{i \in [s]} T_i}}.
    \end{equation}
    We now proceed to bound the sum in
    (\ref{eq:sb}).
    For every integer $k$ 
    such that
    $0 \leq k \leq t$,
    define
    \begin{equation*}
        S(k) :=
        \set{T \in \calt(n,s,t) 
            : \abs{\bigcup_{i \in [s]} T_i} = k}.
    \end{equation*}
    Observe that
    $\abs{S(k)} = \binom{\binom{n}{2}}{k}(2^s)^k$.
    Indeed, there are
    $\binom{\binom{n}{2}}{k}$
    ways of choosing
    $k$ 
    edges from
    $E(K_n)$,
    and
    $(2^s)^k$
    ways of assigning these edges
    to the sets of the $s$-tuples,
    which gives
    the desired equation.
    Therefore,
    \begin{equation}
        \label{ineq:sum_of_ps}
        \sum_{T \in \calt(n,s,t)}
        p^{\abs{\bigcup_{i \in [s]} T_i}}
        =
        \sum_{k=0}^t
        \abs{S(k)}
        p^k
        \leq
        \sum_{k=0}^t
        \binom{\binom{n}{2}}{k}
        (2^s)^k
        p^k
        \leq
        1+
        \sum_{k=1}^t
        \(\frac{e 2^{s-1} n^2 p}{k}\)^k.
    \end{equation}

    Let
    $f(k)$
    be the function
    which maps 
    $k$
    to
    $(eb/k)^k$,
    where 
    $b = 2^{s-1}n^2p$.
    Observe that $f$ is unimodal and achieves its maximum 
    at $k=b$.
    Since
    $2^{s-1}n^2p \geq cn^2p/C=t$
    for $C$ sufficiently large
    with respect to
    $s$ and $c$,
    we obtain
    \begin{align*}
        1+
        \sum_{k=1}^{cn^2p/C}
        \(\frac{e 2^{s-1} n^2 p}{k}\)^k
        &\leq
        1+
        \frac{cn^2p}{C}
        \(\frac{C e 2^{s-1} n^2 p}{cn^2p}\)^{cn^2p/C}
        &&
        \text{$C$ sufficiently large}
        \\&\leq
        n^2
        \(\frac{C e 2^{s-1}}{c}\)^{cn^2p/C}
        &&
        \text{$C$ sufficiently large}
        \\&=
        n^2
        \exp\(\frac{cn^2p}{C} (\log C + 1 + (s-1)\log 2 -
        \log c) \)
        \\&=
        n^2
        \exp\(n^2p \frac{c (\log C + 1 + (s-1)\log 2 -
        \log c)}{C}  \)
        \\&\leq
        n^2
        \exp\(n^2p \frac{\alpha}{3}  \)
        &&
        \text{$C$ sufficiently large}
        \\&\leq
        \exp\(\frac{\alpha n^2p}{2}  \)
        &&
        \text{$n$ sufficiently large}.
    \end{align*}
    We may now conclude
    \begin{equation*}
        \bbp[G(n,p) \in \calg] 
        \leq
        \exp(-\alpha n^2p)
        \exp\(\frac{\alpha n^2p}{2}  \)
        = 
        \exp\(-\frac{\alpha n^2p}{2}  \)
        =
        o(1),
    \end{equation*}
    as desired.
\end{proof}

\section{The Isometric Oriented Ramsey Number}
\label{sec:isometric}

\subsection{Introduction}
\label{sec:iso_introduction}

Recently,
{Banakh}, 
{Idzik}, 
{Pikhurko}, 
{Protasov}
and
{Pszczo{\l}a}~\cite{banakh16:_isomet}
introduced the concept of
\emph{isometric oriented Ramsey number},
and they proved that
the isometric oriented Ramsey number
of any acyclic oriented graph is finite.
Moreover,
they posed 
the problem of estimating
$\dir(\dgh)$
for acyclic oriented graphs $\dgh$.
In this section, we
give an upper bound
on 
$\dir(\dgh)$
when
$\dgh$
is an acyclic orientation
of the cycle
on $k$ vertices $C_k$.
\begin{theorem}
    \label{thm:isometric_cycle}
    There exists 
    a 
    positive constant 
    $c_1$ 
    such that
    the following holds.
    Let
    $\dgh$
    be an acyclic orientation of $C_k$
    and set
    $R := \dr(\dgh)$.
    Then
    \begin{equation}
        \label{ineq:isometric_cycle}
        \dir(\dgh) 
        \leq 
        c_1 k^{12k^3} R^{8k^2}.
    \end{equation}
\end{theorem}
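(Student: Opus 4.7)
The plan is a random-graph + container + deletion argument, adapting the approach of {H{\`a}n}, {Retter}, {R{\"o}dl} and {Schacht}~\cite{han_ramsey_containers} to the oriented setting developed in Section~\ref{sec:containers}.

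\textbf{Reduction to a girth condition.} I will produce, for $n$ bounded as in~(\ref{ineq:isometric_cycle}), a subgraph $G$ of an appropriate $G(n,p)$ such that $G\rms\dgh$ and the girth of $G$ is at least $k$. Any such $G$ automatically satisfies $G\irms\dgh$: if two vertices $u,v$ of a copy of $C_k$ in $G$ were at cycle-distance $d\leq k/2$ and $P$ were a $u$--$v$ path in $G$ of length $d'<d$, let $w$ be the first vertex of $P$, after $u$, that lies on the cycle. The initial piece $P_1$ of $P$ from $u$ to $w$ is internally disjoint from the cycle, so together with the shorter cycle-arc $A_1$ from $u$ to $w$ it forms a cycle in $G$ of length at most $d'+d_{C_k}(u,w)$, which is strictly less than $k$ (since $d'\leq d-1\leq k/2-1$ and $d_{C_k}(u,w)\leq k/2$), contradicting the girth. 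Hence $\dir(\dgh)\leq n$.

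\textbf{Step 1 (random host).} Set $p:=\lambda n^{-1/m_2(\dgh)}=\lambda n^{-(k-2)/(k-1)}$ for a constant $\lambda=\lambda(k,R)$ chosen large enough that the computation in the proof of Theorem~\ref{thm:random_dir_rms}, via Lemma~\ref{lemma:prob_containers}, yields
\[
    \bbp[G(n,p) \nrms \dgh] \leq \exp\bigl(-\delta n^2 p / 2\bigr),
\]
where $\delta\geq 1/(2R^2)$ by Remark~\ref{rk:container_dir_rms}.

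\textbf{Step 2 (robustness under deletions).} I need $G(n,p)\sm B\rms\dgh$ to hold \emph{simultaneously} for every $B\sseq E(K_n)$ with $|B|\leq M$ (with $M$ to be chosen). For each fixed $B$, the graph $G(n,p)\sm B$ is binomial on the ground set $E(K_n)\sm B$, and the container argument of Lemma~\ref{lemma:prob_containers}, applied to the sub-hypergraph of $\cald(n,\dgh)$ obtained by removing the directed edges lying above $B$, gives the same exponential estimate. A union bound over the at most $n^{2M}$ choices of $B$ yields
\[
    \bbp\!\left[\exists B,\ |B|\leq M\colon G(n,p)\sm B \nrms \dgh\right]
    \leq n^{2M}\exp\bigl(-\delta n^2 p / 2\bigr),
\]
which is $o(1)$ provided $M\leq \delta n^2 p/(8\log n)$.

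\textbf{Step 3 (few short cycles).} For $3\leq\ell\leq k-1$, the expected number of $\ell$-cycles in $G(n,p)$ is at most $(np)^\ell=\lambda^\ell n^{\ell/(k-1)}\leq\lambda^{k-1}n$. Summing over $\ell$ and counting at most $k$ edges per cycle, the expected number of edges participating in some cycle of length less than $k$ is at most $k^2\lambda^{k-1}n$. By Markov's inequality this count is at most $M$ with probability $1-o(1)$, provided $M$ exceeds $k^2\lambda^{k-1}n$ by a growing factor.

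\textbf{Step 4 (combine and delete).} Set $M:=\delta n^2 p/(8\log n)$. With positive probability the events of Steps~2 and~3 occur together. Pick such an outcome, let $B$ be the set of edges lying in a cycle of length less than $k$, and take $G:=G(n,p)\sm B$. Then $G\rms\dgh$ and $G$ has girth at least $k$, so $G\irms\dgh$ by the reduction above.

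\textbf{Step 5 (parameter count).} The conditions of Steps~2 and~3 combine to
\[
    n^{1/(k-1)} \geq C_0\,\frac{k^2\,\lambda^{k-2}\,\log n}{\delta}
\]
for an absolute constant $C_0$. Unwinding the container bookkeeping of Lemma~\ref{lemma:deg_cont}, Theorem~\ref{thm:container_dir_rms} and Remark~\ref{rk:container_dir_rms} with $l=h=k$ and $\eps=\Theta\bigl(1/(\emb_{\dgh}\binom{R}{k})\bigr)$, and optimising the resulting $k$- and $R$-dependencies of $\lambda$ and $\delta$, yields the bound $n\leq c\,k^{12k^3}R^{8k^2}$ claimed in~(\ref{ineq:isometric_cycle}).

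\textbf{Main obstacle.} The delicate part is the quantitative optimisation in Step~5: a naive substitution of the constants produced by the container lemma gives $R$-exponents of order $k^3$ rather than $k^2$, so one must track the interplay between $\lambda$, $\delta$ and $M$ carefully to get the exponents down to $8k^2$ on $R$ and $12k^3$ on $k$. A secondary but non-trivial point is the uniformity in $B$ of the container estimate used in Step~2, which has to be verified in order to absorb the factor $n^{2M}$ from the union bound inside $\exp(-\delta n^2p/2)$.
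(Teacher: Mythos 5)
Your opening reduction (girth $\geq k$ together with $G\rms\dgh$ implies $G\irms\dgh$) is correct and is exactly the reduction the paper uses, though the paper leaves it implicit; your verification is a nice addition. The real divergence from the paper's proof is in how you establish that some $n$-vertex graph is both high-girth and $\rms\dgh$. You use a \emph{deletion} argument: show $G(n,p)\sm B\rms\dgh$ simultaneously for all small $B$, then delete the edges lying on short cycles. The paper instead never deletes anything: it proves a \emph{lower bound} $\bbp[\girth(G(n,p))\geq k]\geq\exp(-kD_p^{k-1}n)$ via the FKG inequality (Claim~\ref{claim:girth}, following Claim~3.1 of~\cite{han_ramsey_containers}), combines it with $\bbp[G(n,p)\rms\dgh]\geq 1-\exp(-n^2p/(4R^2))$ by inclusion--exclusion, and checks that the first exponent is strictly smaller than the second so the intersection has positive probability. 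This correlation argument buys cleanliness: there is one application of the container machinery, to $G(n,p)$ itself, and no robustness claim to verify.

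The gap you yourself flag in Step~2 is the genuine one, and it is not a routine verification. Lemma~\ref{lemma:prob_containers} (through Theorem~\ref{thm:container_dir_rms} and the saturation result Theorem~\ref{thm:dir_rms_quant}) is proved for orientations of subsets of $E(K_n)$, and the quantitative conclusion $\abs{E(K_n)\sm C}\geq\delta n^2$ uses the full ground set $E(K_n)$. Passing to ``the sub-hypergraph of $\cald(n,\dgh)$ obtained by removing the directed edges above $B$'' changes the ground set, the vertex count $v(\calh)$, the average degree $d_1$, and hence the co-degree function $\delta(\cald,\tau)$; you would need to re-prove Lemma~\ref{lemma:deg_cont} and Theorem~\ref{thm:dir_rms_quant} uniformly over all $B$ with $|B|\leq M$, and then absorb the union-bound factor $n^{2M}$. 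None of this is impossible, but it is exactly the extra bookkeeping the paper's FKG approach avoids. A cleaner version of your Step~2 would run the container lemma on $E(K_n)$ once and argue that if $G(n,p)\sm B\nrms\dgh$ then $G(n,p)$ has at most $|B|\leq M$ edges inside $D(T)=E(K_n)\sm C(T)$ for some $s$-tuple $T$, and union-bound over $T$ rather than over $B$; this stays within the framework the paper sets up. As written, Step~2 is a genuine gap, and Step~5 (optimising the constants to land on the exponents $12k^3$ and $8k^2$) is stated rather than carried out, whereas the paper fixes explicit values of $\eps$, $D_\tau$, $K$, $s$, $D_p$, $n$, $\tau$, $p$ up front and verifies the numeric inequalities directly.
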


\begin{remark}
    As observed in the Section~\ref{sec:oriented_rms_number},
    we have $\dr(\dgh) \leq \sqrt{2}k$
    for every acyclic orientation $\dgh$ of the cycle $C_k$.
    In light of
    Theorem~\ref{thm:isometric_cycle},
    one readily sees 
    that
    there exists
    a 
    universal constant
    $c_1$ 
    such that
    \begin{equation*}
        \dir(\dgh) 
        \leq 
        k^{c_1 k^3}.
    \end{equation*}
\end{remark}

The approach 
employed in this section
to prove
Theorem~\ref{thm:isometric_cycle}
is 
very similar to
the proof
of Theorem 1.1 
in
{H{\`a}n},
{Retter},
{R{\"o}dl},
and
{Schacht}~\cite{han_ramsey_containers}.
We will prove a container theorem for graphs with $\dgh$-free
orientations, when $\dgh$ is an acyclic orientation of a cycle.
This will be a more refined version of Theorem~\ref{thm:container_dir_rms}
for this specific case. In particular, we will pay closer attention to the
numbers given by the container theorem.

\subsection{A container lemma for acyclic orientations of cycles}

We begin by observing that,
for every orientation $\dgh$ of the cycle $C_k$,
we have
\begin{equation}
    \label{eq:2_density}
    m_2(\dgh)
    =
    m_2(C_k)
    =
    \frac{k-1}{k-2}.
\end{equation}
This will justify
the choice of constants 
which
we will 
make in the rest of this
section.

We now prove the following lemma,
which is a
slightly improved version of
Lemma~\ref{lemma:deg_cont}
adjusted for orientations of cycles.
Our proof makes uses of 
arguments and results
from the proof of
Lemma~\ref{lemma:deg_cont}.

\begin{lemma}
    \label{lemma:deg_cont_cycles}
    Let $\dgh$ be 
    an orientation
    of the cycle $C_k$.
    Let also $D_\tau \geq 1$
    and
    define
    $\tau$ as
    $\tau 
    := 
    D_\tau n^{-(k-2)/(k-1)}$.
    For every
    $n \geq D_\tau ^{(k-1)^2}$,
    we have
    \begin{equation*}
        \delta(\cald(n, \dgh), \tau)
        \leq
        2^{\binom{k}{2}}
        k^{k-2}
        D_\tau^{-(k-1)}.
    \end{equation*}
\end{lemma}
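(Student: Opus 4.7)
The plan is to revisit the argument of Lemma~\ref{lemma:deg_cont} and exploit the fact that, for a cycle $C_k$, the inequality $m_2(\dgh) \geq (j-1)/(f(j)-2)$ used in that proof is tight only at $j=k$ and has genuine slack for $2 \leq j \leq k-1$. Converting that slack into powers of $D_\tau$ via the hypothesis $n \geq D_\tau^{(k-1)^2}$ will supply the extra factors of $D_\tau^{-1}$ we need to reach $D_\tau^{-(k-1)}$.

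First, I would compute $f(j)$, as defined in~\eqref{eq:def_fj}, for the cycle. Any $j < k$ edges of $C_k$ form a union of paths; the fewest vertices is achieved by taking a single path, so $f(j) = j+1$ for $2 \leq j \leq k-1$, while $f(k) = k$. Substituting $h = k$ and $m_2(\dgh) = (k-1)/(k-2)$ into the bound~\eqref{eq:dgc1} obtained in the proof of Lemma~\ref{lemma:deg_cont}, namely
\begin{equation*}
    \delta_j \leq h^{h-2}\, n^{2 - f(j) + (j-1)/m_2(\dgh)}\, D_\tau^{1-j},
\end{equation*}
the $n$-exponent for $2 \leq j \leq k-1$ becomes
\begin{equation*}
    2 - (j+1) + \tfrac{(j-1)(k-2)}{k-1} = -\tfrac{j-1}{k-1},
\end{equation*}
whereas for $j = k$ it vanishes. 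Thus for $j < k$ we pick up a factor $n^{-(j-1)/(k-1)}$ that Lemma~\ref{lemma:deg_cont} previously discarded.

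Next, I would invoke the assumption $n \geq D_\tau^{(k-1)^2}$ to translate this $n$-saving into $D_\tau$-saving: for $2 \leq j \leq k-1$,
\begin{equation*}
    n^{-(j-1)/(k-1)} \leq D_\tau^{-(j-1)(k-1)},
\end{equation*}
so that
\begin{equation*}
    \delta_j \leq k^{k-2}\, D_\tau^{-(j-1)(k-1)}\, D_\tau^{1-j} = k^{k-2}\, D_\tau^{-(j-1)k} \leq k^{k-2}\, D_\tau^{-(k-1)},
\end{equation*}
where the last step uses $D_\tau \geq 1$ and $j \geq 2$. For $j = k$ the original bound already gives $\delta_k \leq k^{k-2} D_\tau^{1-k} = k^{k-2} D_\tau^{-(k-1)}$. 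Hence every $\delta_j$ is dominated by $k^{k-2} D_\tau^{-(k-1)}$.

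Finally, plugging this uniform bound into the definition of the co-degree function and using $\sum_{j \geq 2} 2^{-\binom{j-1}{2}} \leq 2$, we conclude
\begin{equation*}
    \delta(\cald(n,\dgh),\tau)
    = 2^{\binom{k}{2}-1} \sum_{j=2}^{k} 2^{-\binom{j-1}{2}} \delta_j
    \leq 2^{\binom{k}{2}}\, k^{k-2}\, D_\tau^{-(k-1)},
\end{equation*}
as required. The main obstacle is purely bookkeeping: correctly identifying $f(j)$ for the cycle and verifying that the hypothesis on $n$ is strong enough to absorb the $n$-exponent $-(j-1)/(k-1)$ uniformly in $j$; the threshold $n \geq D_\tau^{(k-1)^2}$ is exactly what is needed to make the worst case $j = 2$ work.
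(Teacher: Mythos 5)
Your proof is correct and follows essentially the same route as the paper's: compute $f(j)=j+1$ for $j\le k-1$ and $f(k)=k$, specialize the bound~\eqref{eq:dgc1} with $h=k$ and $m_2=(k-1)/(k-2)$, observe that the $n$-exponent is $-(j-1)/(k-1)$ for $j<k$ and vanishes at $j=k$, then invoke $n\ge D_\tau^{(k-1)^2}$ to turn that slack into a uniform bound $\delta_j\le k^{k-2}D_\tau^{-(k-1)}$ before summing against $2^{-\binom{j-1}{2}}$. The only (cosmetic) difference is bookkeeping: you convert the $n$-power directly into $D_\tau^{-(j-1)(k-1)}$ and combine it with the surviving $D_\tau^{1-j}$ factor, whereas the paper first discards all but the weakest ($j=2$) $n$-exponent and the $D_\tau$-factor for $j<k$, then applies the hypothesis on $n$ at the end; both are equivalent and yield the same conclusion.
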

\begin{proof}
    Fix
    $j \in [k]$.
    Let
    $f(j)$ 
    be as defined in~(\ref{eq:def_fj}).
    Since $\dgh$ is 
    an orientation of 
    the cycle on $k$
    vertices,
    we have 
    $f(j)=j+1$
    for every $j \in [k-1]$
    and
    $f(k)=k$.
    Furthermore,
    by~(\ref{eq:dgc1})
    we obtain
    \begin{equation*}
        \delta_j
        \leq
        k^{k-2}
        n^{2-f(j)+(j-1)(k-2)/(k-1)}
        D_\tau^{1-j}.
    \end{equation*}
    Therefore,
    for 
    $j \in [k-1]$
    we have
    \begin{align}
        \delta_j
        &
        \leq
        k^{k-2}
        n^{1-j+(j-1)(k-2)/(k-1)}
        D_\tau^{1-j}
        \nonumber
        \\&
        =
        k^{k-2}
        n^{-(j-1)/(k-1)}
        D_\tau^{1-j}
        \nonumber
        \\&
        \leq
        k^{k-2}
        k^{-1/(k-1)}
        D_\tau^{-1}
        \nonumber
        \\&
        \label{ineq:dgcyc1}
        \leq
        k^{k-2}
        n^{-1/(k-1)}.
    \end{align}
    From inequality~(\ref{eq:dgc2})
    proved in Lemma~\ref{lemma:deg_cont},
    we obtain
    that
    \begin{align}
        \label{ineq:dgcyc2}
        \delta_k
        \leq
        k^{k-2}
        D_\tau^{-(k-1)}.
    \end{align}
    Since, by assumption,
    we have
    $n \geq D_\tau^{(k-1)^2}$,
    inequalities
    (\ref{ineq:dgcyc1})
    and
    (\ref{ineq:dgcyc2})
    now give us
    \begin{equation*}
        \max_{j \in [k]}
        \delta_j
        =
        \delta_k.
    \end{equation*}
    We therefore conclude
    \begin{equation*}
        \delta(\calh, \tau)
        =
        2^{\binom{l}{2}-1}
        \sum_{j=2}^l
        {2^{-\binom{j-1}{2}}}
        \delta_j
        \leq
        2^{\binom{l}{2}-1}
        k^{k-2}D_\tau^{-(k-1)}
        \sum_{j=2}^l
        2^{-\binom{j-1}{2}}
        \leq
        2^{\binom{l}{2}}
        k^{k-2}D_\tau^{-(k-1)},
    \end{equation*}
    as desired.
\end{proof}

We are now able to state and prove
our container lemma
for graphs admitting $\dgh$-free orientations
in the specific case when $\dgh$ is an acyclic orientation of $C_k$.

\begin{theorem}
    \label{thm:container_dir_cycle}
    Let
    $\dgh$ 
    be an acyclic orientation of $C_k$.
    There exists 
    positive integers 
    $n_0$, $s$ and 
    $c$
    such that 
    the following holds
    for every $n \geq n_0$.
    For every graph $G$ on 
    $n$
    vertices
    such that 
    $G \nrms \dgh$,
    there exists
    an $s$-tuple
    $T=(T_1,\dots,T_s) \in E(G)^s$
    and 
    a set
    $C = C(T) \sseq E(K_n)$
    depending only on 
    $T$
    such that
    \begin{enumerate}[label=(\alph*)]
        \item $\bigcup_{i \in [s]} T_i \sseq E(G) \sseq C$,
        \item $\abs{E(K_n)\sm C} \geq n^2/(2R^2)$, and
        \item
            $T \in \calt(n,s,cn^{2-(k-2)/(k-1)})$.
    \end{enumerate}
    In particular, the constants can be chosen to be the following:
    \begin{alignat}{3}
        &
        n_0
        &&= 
        D^{(k-1)^2},
        \\
        &
        s 
        &&=
        \floor{K \log(2R^k)}
        &&\leq
        1600 k^{3k+2}
        R
        ,
        \\
        &
        c
        &&=
        sKD
        ,
    \end{alignat}
    where $R = \dr(\dgh)$
    and
    \begin{alignat}{3}
        &
        D
        &&= 
        4 \cdot 2^{k/2} \cdot k^2 \cdot (2R^k)^{1/(k-1)}
        &&\leq
        8 R^2 k^{k+2},
        \\
        &
        K 
        &&= 800k(k!)^3 
        &&\leq
        800k^{3k+1}.
    \end{alignat}
\end{theorem}
\begin{proof}
    The proof is very similar to Theorem~\ref{thm:container_dir_rms}.
    Let $\eps := 1/(2R^k)$
    and suppose $n \geq n_0$.
    Observe that
    \begin{equation*}
        D^{k-1}
        =
        \(
            \frac{4 \cdot 2^{k/2} \cdot
            k^2}{\eps^{1/(k-1)}}
        \)^{k-1}
        =
        \frac{4^{k-1} \cdot 2^{\binom{k}{2}} \cdot
        k^{2(k-1)}}{\eps}
        \geq
        \frac{12 \cdot 2^{\binom{k}{2}} \cdot
        k^{k-2} \cdot k!}{\eps}.
    \end{equation*}
    Moreover, let $\tau = D n^{-(k-2)/(k-1)}$.
    Lemma~\ref{lemma:deg_cont_cycles}
    now yields
    $\delta(\cald(n, \dgh), \tau) \leq \eps/(12k!)$.
    Theorem~\ref{thm:container}
    gives us
    numbers
    $s$ and $K$
    for 
    $\calh$, $\eps$ and $\tau$,
    where
    \begin{alignat}{3}
        &
        K 
        &&= 800k(k!)^3,
        \\
        &
        s 
        &&=
        \floor{K \log(1/\eps)}
        &&
        =
        \floor{K \log(2R^k)}.
    \end{alignat}

    Now item (a) can be proved in exactly the same way as in the proof of
    Theorem~\ref{thm:container_dir_rms}.
    Item (b) can proved by observing that
    \begin{equation*}
        \eps
        =
        \frac{1}{2R^k}
        \leq
        \frac{1}{2k!\binom{R}{k}}
        \leq
        \frac{1}{2\emb_{\dgh}g\binom{R}{k}}.
    \end{equation*}
    Therefore, every container $C$ will admit an orientation which has at most
    $\eps \emb_{\dgh}g \binom{n}{k} \leq 
    (2 \binom{R}{k})^{-1} \binom{n}{k}$ copies of $\dgh$.
    Item (b) then follows from Theorem~\ref{thm:dir_rms_quant}.
    Finally, item (c) can be shown just as in the proof of
    Theorem~\ref{thm:container_dir_rms}, by letting $c := sKD$.
\end{proof}

\subsection{Proof of Theorem~\ref{thm:isometric_cycle}}
We may now proceed to the proof of
Theorem~\ref{thm:isometric_cycle}.
The proof will be as follows.
We will consider the random graph
$G(n,p)$
and, imitating the proof of
Theorem~\ref{thm:random_dir_rms},
we will prove that,
with positive probability,
we have
$G(n,p) \irms \dgh$,
for
a number $n$ that satisfies
(\ref{ineq:isometric_cycle})
and a suitable choice of
$p$.
Our strategy will be to prove that
the graph 
$G(n,p)$ 
has girth at least $k$
and satisfies
$G(n,p) \rms \dgh$
for an acyclic orientation $\dgh$ of $C_k$,
which implies
$G(n,p) \irms \dgh$.

\begin{proof}[Proof of Theorem~\ref{thm:isometric_cycle}]
    Let $n_0$, $s$ ,$c$, $D$ and $K$
    be as given by Theorem~\ref{thm:container_dir_cycle}
    for $\dgh$.
    We begin by setting the 
    following
    numbers we are going to use in
    the proof:
    \begin{alignat}{3}
        &
        D_p 
        &&= K D s^2 10 R^2 \log(5R^2) 
        ,
        \\
        &
        n 
        &&= D_p^{k^2},
        \\
        &
        p 
        &&= D_p n^{-\frac{k-2}{k-1}}.
    \end{alignat}
    Observe that, for some positive constant $c_1 > 0$,
    we have
    \begin{equation*}
        D_p
        \leq
        c_1 \cdot
        k^{10k+7} R^8
        \leq
        k^{12k}R^{8},
    \end{equation*}
    which implies
    \begin{equation*}
        n \leq c_2 k^{12k^3} R^{8k^2}.
    \end{equation*}
    Moreover, we also have $n \geq n_0$.
    Therefore, we can apply Theorem~\ref{thm:container_dir_cycle}
    to graphs with $n$ vertices.

    Let us first prove the following claim.
    The proof goes just as in the proof of
    Claim~3.1 of~\cite{han_ramsey_containers}.
    \begin{claim}
        \label{claim:girth}
        We have
        $
            \bbp[\girth(G(n,p)) \geq k]
            \geq
            \exp(-kD_p^{k-1}n).
            $
    \end{claim}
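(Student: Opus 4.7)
The plan is to reduce the event $\{\girth(G(n,p)) \geq k\}$ to contributions from each short cycle length that can be decoupled via the FKG inequality. First, I would write
\begin{equation*}
    \{\girth(G(n,p)) \geq k\} = \bigcap_{i=3}^{k-1} Q_i,
\end{equation*}
where $Q_i$ is the event that $G(n,p)$ contains no cycle of length~$i$. Since each $Q_i$ is a monotone decreasing graph property, Corollary~\ref{cor:fkg_induction} immediately gives
\begin{equation*}
    \bbp[\girth(G(n,p)) \geq k] \geq \prod_{i=3}^{k-1}\bbp[Q_i].
\end{equation*}

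For each fixed $i$, I would apply FKG a second time by writing $Q_i = \bigcap_{C} A_C$, where the intersection ranges over all labelled cycles $C$ of length $i$ on the vertex set $[n]$ and $A_C$ is the monotone decreasing event that $C \not\subseteq G(n,p)$. Since the number of such cycles is at most $n^i$ and $\bbp[A_C] = 1 - p^i$, this yields
\begin{equation*}
    \bbp[Q_i] \geq (1-p^i)^{n^i}.
\end{equation*}
Using inequality~(\ref{ineq:exp_xfrac}) with $x = p^i$ (which is well below $1/2$ given that $p = D_p n^{-(k-2)/(k-1)}$ and $n = D_p^{k^2}$) rearranges this into
\begin{equation*}
    \bbp[Q_i] \geq \exp(-2 n^i p^i).
\end{equation*}

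To close the estimate, I would use that $np = D_p n^{1/(k-1)} \geq 1$ and therefore $n^i p^i = (np)^i \leq (np)^{k-1} = D_p^{k-1} n$ for every $i \in \{3,\dots,k-1\}$. Combining the two applications of FKG with this bound, and absorbing the factor $2(k-3)$ into $k$, produces
\begin{equation*}
    \bbp[\girth(G(n,p)) \geq k] \geq \exp\!\left(-\sum_{i=3}^{k-1} 2 n^i p^i\right) \geq \exp\!\left(-k D_p^{k-1} n\right),
\end{equation*}
as required. The most delicate step is the bookkeeping behind the double application of FKG: one must recast the event of having no short cycle as an intersection of monotone decreasing events (one per cycle length, and then one per potential cycle) rather than as a union of bad events, since it is the intersection form that is amenable to Corollary~\ref{cor:fkg_induction}. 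Everything after that is a routine calculation tailored to the chosen value of $p$.
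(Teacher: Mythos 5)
Your overall approach is the same as the paper's: reduce $\{\girth(G(n,p)) \geq k\}$ to an intersection of monotone decreasing ``avoid this cycle'' events, apply the FKG inequality (Corollary~\ref{cor:fkg_induction}) to lower-bound the probability by a product, and then bound the resulting exponent. The paper does this in a single pass over all cycles of length at most $k-1$ (it counts $\bbe[X]$ with the sharper factor $\binom{n}{j}(j-1)!/2 \leq n^j/(2j)$ per cycle length), while you split by cycle length and then by cycle, which is harmless but redundant, and you use the cruder count $n^i$. Both are fine in principle.

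However, your final bookkeeping step has a genuine gap. You bound each summand by $(np)^i \leq (np)^{k-1} = D_p^{k-1}n$ and then claim the sum $\sum_{i=3}^{k-1} 2(np)^i \leq 2(k-3)(np)^{k-1}$ can be ``absorbed'' into $k(np)^{k-1}$. But $2(k-3) \leq k$ fails for every $k \geq 7$, so as written the argument does not give the claimed bound for general $k$. The conclusion is nonetheless correct, and the fix is easy: since $np = D_p n^{1/(k-1)} \geq 2$, the geometric decay gives
\begin{equation*}
\sum_{i=3}^{k-1} 2(np)^i \leq 2(np)^{k-1}\sum_{j\geq 0}(np)^{-j} \leq 4(np)^{k-1} \leq k(np)^{k-1}
\end{equation*}
for all $k \geq 4$ (and for $k = 3$ the sum is empty). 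Alternatively, you could have kept the $\tfrac{1}{2j}$ factor from the exact cycle count, as the paper does, which yields $\sum_j (np)^j/(2j) \leq \tfrac{k}{6}(np)^{k-1}$ and avoids the issue entirely.
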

    \begin{proof}
        Let
        $\calc(n,k)$
        be the set
        of all cycles $C \sseq E(K_n)$
        of length at most $k-1$.
        Let
        \begin{equation*}
            X := 
            \abs{
            \set{C \in \calc(n,k) : C \sseq E(G(n,p)) }}
        \end{equation*}
        be the random variable
        counting the number
        of cycles of length at most $k-1$
        in $G(n,p)$.
        For each
        cycle 
        $C \sseq E(K_n)$
        of length at most $k-1$,
        let
        $X_C$ be the indicator function of the event
        $E_C := \set{C \sseq E(G(n,p))}$.
        Clearly,
        $X$ is the sum of all such $C$.
        Therefore,
        \begin{equation*}
            \label{eq:cl1}
            \bbe[X]
            =
            \sum_{C \in \calc(n,k)}
            p^{\abs{C}}
            =
            \sum_{j=3}^{k-1}
            \frac{(j-1)!}{2}
            \binom{n}{j}
            p^j
            \leq
            \sum_{j=3}^{k-1}
            \frac{(pn)^j}{2j}
            \leq
            \frac{k}{6}(pn)^{k-1}
            =
            \frac{k}{6}D_p^{k-1}n.
        \end{equation*}
        Moreover,
        the set of all graphs $G$ on $n$ vertices such that
        $C \not\sseq E(G)$ is a monotone decreasing property.
        Therefore, 
        using the FKG inequality
        and applying
        the inequality
        $1-x \geq \exp(-x/(1-x))$
        for $x \in [0,1)$,
        we get
        \begin{align*}
            \label{eq:cl2}
            \bbp[\girth(G(n,p)) \geq k]
            =
            \prod_{C \in \calc(n,k)}
            (1-p^{\abs{C}})
            \geq
            \prod_{C \in \calc(n,k)}
            \exp\(-\frac{p^{\abs{C}}}{1-p^{\abs{C}}}\)
            \geq
            \exp\(-\frac{\bbe[X]}{1-p^3}\).
        \end{align*}
        One may now easily check that
        \begin{equation*}
            1-p^3 = 1-n^{-(k-2)/(k-1) + 1/k^2} > 1/6,
        \end{equation*}
        since $n > 11$, and the claim follows.
    \end{proof}
    We now prove the following claim.
    Our proof will be similar to that of
    Theorem~\ref{thm:random_dir_rms},
    with the difference 
    that the calculations will be more
    involved.
    \begin{claim}
        \label{claim:ramsey_cycles}
        We have
        \begin{equation*}
            \label{ineq:ramsey_cycles}
            \bbp[G(n,p) \rms \dgh]
            \geq
            1-\exp\(-\frac{n^2p}{4R^2}\).
        \end{equation*}
    \end{claim}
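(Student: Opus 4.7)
The plan is to apply Lemma~\ref{lemma:prob_containers} with the parameters tuned as in the theorem setup and check that the resulting exponential upper bound on $\bbp[G(n,p) \nrms \dgh]$ is at most $\exp(-n^2p/(4R^2))$; the claim will then follow by complementation.

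First I verify the hypotheses of Lemma~\ref{lemma:prob_containers}. With the stated choice of $D_\tau$, Lemma~\ref{lemma:deg_cont_cycles} gives
\begin{equation*}
    \delta(\cald(n,\dgh), \tau)
    \leq
    2^{\binom{k}{2}} k^{k-2} D_\tau^{-(k-1)}
    \leq
    \eps/(12 k!),
\end{equation*}
since $D_\tau^{k-1} = 4^{k-1} \cdot 2^{\binom{k}{2}} k^{2(k-1)}/\eps$ and the elementary inequality $4^{k-1} k^k \geq 12 k!$ holds for every $k \geq 3$. The side condition $n \geq D_\tau^{(k-1)^2}$ is automatic from $n = D_p^{k^2}$ and $D_p \geq D_\tau$. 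Lemma~\ref{lemma:prob_containers} then yields
\begin{equation*}
    \bbp[G(n,p) \nrms \dgh]
    \leq
    \exp(-\delta n^2 p)
    \left(1 + \sum_{j=1}^{t} \left(\frac{e \cdot 2^{s-1} n^2 p}{j}\right)^j \right),
\end{equation*}
where $t = sK\tau n^2$. By Remark~\ref{rk:container_dir_rms}, using the bounds $\emb_\dgh \leq k!$ and $\binom{R}{k} \leq R^k/k!$, the choice $\eps = 1/(2R^k)$ forces $\eps \cdot \emb_\dgh \binom{R}{k} \leq 1/2$, hence $\delta \geq 1/(2R^2)$. Consequently it suffices to prove that the bracketed factor is bounded by $\exp(n^2 p/(4R^2))$.

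Substituting $\tau = D_\tau n^{-(k-2)/(k-1)}$, $p = D_p n^{-(k-2)/(k-1)}$, and the definition $D_p = K D_\tau s^2 \cdot 10 R^2 \log(5R^2)$, one computes
\begin{equation*}
    t
    =
    \frac{sK D_\tau}{D_p}\, n^2 p
    =
    \frac{n^2 p}{10\, s\, R^2 \log(5R^2)},
\end{equation*}
so in particular $t \leq 2^{s-1} n^2 p =: b$. By Fact~\ref{fact:ecx_x} the function $j \mapsto (eb/j)^j$ is monotonically increasing on $[1,t]$, so the sum is at most $t (eb/t)^t$.

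The main obstacle is the final quantitative estimate
\begin{equation*}
    \log t + t\bigl(1 + \log(b/t)\bigr) \leq \frac{n^2 p}{4R^2}.
\end{equation*}
This reduces, upon noting $\log(b/t) = (s-1)\log 2 + \log(10 s R^2 \log(5R^2))$, to a routine but attentive calculation. The dominant contribution is $ts\log 2 = n^2 p \log 2/(10 R^2 \log(5R^2))$, which is strictly less than $n^2 p/(4R^2)$ for every $R \geq 1$; the remaining term $t\log(10 s R^2 \log(5R^2))$ is suppressed by the factor $1/s$ inside $t$, and $\log t = O(\log n) \ll n^2 p$. Here the factor $10 R^2 \log(5R^2)$ in the definition of $D_p$ is precisely what provides the slack needed to beat the target exponent $1/(4R^2)$ uniformly in $R$, and the $s^2$ in $D_p$ kills the $2^{s-1}$ arising from the container $s$-tuples. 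Combining everything gives $\bbp[G(n,p) \nrms \dgh] \leq \exp(-n^2 p/(4R^2))$, as required.
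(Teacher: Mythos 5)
Your proposal is correct and follows essentially the same route as the paper's own proof: verify the co-degree hypothesis via Lemma~\ref{lemma:deg_cont_cycles}, use the choice of $\eps$ together with Remark~\ref{rk:container_dir_rms} to get $\delta \geq 1/(2R^2)$, apply Lemma~\ref{lemma:prob_containers}, and bound the sum by its last term via Fact~\ref{fact:ecx_x} before splitting $\log(b/t)$. The paper carries out the final ``routine but attentive calculation'' more explicitly (bounding $\log(x)/x$ via the monotonicity argument with $y = x/s$ and absorbing the $n^2$ prefactor using $2\log n \leq n^2p/(20R^2)$), but the parameter choices and the structure of the estimate are identical to yours.
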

    \begin{proof}
        Applying 
        Theorem~\ref{thm:container_dir_cycle}
        instead of Theorem~\ref{thm:container_dir_rms},
        we can follow 
        the proof of Theorem~\ref{thm:random_dir_rms}
        up until inequalities~(\ref{eq:sb}) and~(\ref{ineq:sum_of_ps}).
        We then get
        \begin{equation}
            \label{ineq:pb1}
            \bbp[G(n,p) \nrms \dgh]
            \leq
            \exp\(-\frac{n^2p}{2R^2}\)
            \(
            1+
            \sum_{j=1}^{t}
            \( \frac{e2^{s-1}n^2p}{j}\)^j \),
        \end{equation}
        where 
        $t =  sKD n^{2-(k-1)/(k-2)}$.
        We now proceed to bound the sum in
        (\ref{ineq:pb1}).
        Let
        $f(k)$
        be the function
        which maps 
        $j$
        to
        $(eb/j)^j$,
        where 
        $b = 2^{s-1}n^2p$.
        Observe that $f$ is unimodal and achieves its maximum 
        at $k=b$.
        Observe moreover that
        \begin{equation*}
            2^{s-1}n^2p
            =
            2^{s-1}D_p
            n^{-\frac{k-2}{k-1}}n^2
            \geq
            sKD
            n^{-\frac{k-2}{k-1}}n^2.
        \end{equation*}
        whence it follows
        that
        \begin{align*}
            1+
            \sum_{k=1}^{t}
            \(\frac{e 2^{s-1} n^2 p}{k}\)^k
            &\leq
            1+
            t
            \(\frac{e 2^{s-1} n^2 p}{t}\)^{t}
            =
            1+
            t
            \(\frac{e 2^{s-1} D_p}{sK D}\)^{t}.
        \end{align*}
        Moreover,
        since
        \begin{equation*}
            sKD n^{-(k-2)/(k-1)}
            =
            sKD D_p^{-k^2(k-2)/(k-1)}
            \leq
            sKD D_p^{-1}
            < 1,
        \end{equation*}
        we obtain
        \begin{align*}
            1+
            t
            \(\frac{e 2^{s-1} D_p}{sK D}\)^{t}
            &\leq
            n^2
            \(\frac{e 2^{s-1} D_p}{sK D}\)^{t}
            \\&=
            n^2
            \exp\(t 
            \cdot
            \log 
            \frac{e 2^{s-1} D_p}{sK
            D}  
            \)
            \\&=
            n^2
            \exp\(n^2p 
            \cdot
            \frac{sKD}{D_p} 
            \cdot
            \log 
            \frac{e 2^{s-1} D_p}{sK
            D}  
            \)
            \\&=
            n^2
            \exp\Bigg(
            n^2p 
            \cdot
            \frac{sKD}{D_p} 
            \(
            \log(e2^{s-1})
            +
            \log
            \frac{ D_p}{sK D}  
            \)
            \Bigg).
        \end{align*}
        Observe now that
        \begin{equation}
            \label{ineq:dtdp1}
            \frac{sKD}{D_p} 
            \log(e2^{s-1})
            =
            \frac{\log(e2^{s-1})}{s10R^2\log(5R^2)} 
            \leq
            \frac{1}{10R^2\log(5R^2)} 
            \leq
            \frac{1}{10R^2}.
        \end{equation}
        Let now
        $x := D_p/(sK D)$
        and
        set
        $y := x/s$.
        Since the function
        $\log (x)/x$ 
        is decreasing for $x > e$,
        we have
        $\log (x)/x \leq \log (y)/y$.
        Note also that
        $y = 10R^2\log(5R^2) \leq (5R^2)^2$,
        since $\log x < x/2$ for $x> 0$.
        applying
        this inequality once again,
        we obtain
        $\log y \leq \log(5R^2)$.
        These observations allow us to conclude that
        \begin{equation}
            \label{ineq:dtdp2}
            \frac{\log( D_p/(sK D))}{D_p/(sK D)}
            =
            \frac{\log x}{x}
            \leq
            \frac{\log y}{y}
            \leq
            \frac{\log (5R^2)}{10R^2\log(5R^2)}
            =
            \frac{1}{10R^2}.
        \end{equation}
        Hence, by inequalities
        (\ref{ineq:dtdp1})
        and
        (\ref{ineq:dtdp2})
        we obtain
        \begin{equation*}
            n^2
            \exp\Bigg(
            n^2p 
            \cdot
            \frac{sKD}{D_p} 
            \(
            \log(e2^{s-1})
            +
            \log
            \frac{ D_p}{sK D}  
            \)
            \Bigg)
            \leq
            n^2
            \exp
            \(
            \frac{n^2p}{5R^2}
            \).
        \end{equation*}
        Observe now that
        \begin{equation*}
            \frac{n^2p}{\log n}
            =
            \frac{D_p^{2k^2}D_pD_p^{-k^2(k-2)/(k-1)}}
            {k^2\log (D_p)}
            \geq
            \frac{D_p^{2k^2}D_pD_p^{-k^2}}
            {k^2 D_p}
            =
            \frac{D_p^{k^2}}{k^2}
            \geq
            \frac{D_p}{k^2}
            \geq
            \frac{10R^2D}{k^2}
            \geq
            40R^2.
        \end{equation*}
        From this we obtain
        \begin{equation*}
            2\log n
            \leq
            \frac{n^2p}{20R^2}
            =
            \frac{n^2p}{4R^2}
            -
            \frac{n^2p}{5R^2},
        \end{equation*}
        which implies
        \begin{equation*}
            n^2
            \exp
            \(
            \frac{n^2p}{5R^2}
            \)
            \leq
            \exp
            \(
            \frac{n^2p}{4R^2}
            \).
        \end{equation*}
        All our work so far therefore implies
        \begin{equation*}
            1+
            \sum_{j=1}^{t}
            \( \frac{e2^{s-1}n^2p}{j}\)^j 
            \leq
            \exp
            \(
            \frac{n^2p}{4R^2}
            \),
        \end{equation*}
        which, in view of~(\ref{ineq:pb1}),
        yields
        \begin{equation*}
            \bbp[G(n,p) \nrms \dgh]
            \leq
            \exp
            \(
            -
            \frac{n^2p}{4R^2}
            \).
        \end{equation*}
        This finishes the proof of the claim.
    \end{proof}
    Now, in view of
    Claim~\ref{claim:girth}
    and
    Claim~\ref{claim:ramsey_cycles},
    we can deduce
    \begin{equation}
        \label{ineq:final_cycles}
        \begin{aligned}
            \bbp[\girth{G(n,p)} \geq k \text{ and } G(n,p) \rms \dgh]
            &\geq
            \bbp[\girth(G(n,p)) \geq k]
            +
            \bbp[G(n,p) \rms \dgh]
            -1
            \\&\geq
            \exp(-kD_p^{k-1}n)
            -
            \exp\(-\frac{n^2p}{4R^2}\).
        \end{aligned}
    \end{equation}
    Since we also have
    \begin{equation*}
        \frac{n^2p}{4R^2}
        =
        n
        \cdot
        \frac{D_p^{k^2+1-k^2(1-1/(k-1))}}{4R^2}
        >
        n
        \cdot
        \frac{D_p^{k+1}}{4R^2}
        > 
        kD_p^{k-1}n,
    \end{equation*}
    we may now conclude from~(\ref{ineq:final_cycles})
    that
    \begin{equation*}
        \bbp[\girth{G(n,p)} \geq k \text{ and } G(n,p) \rms \dgh]
        > 0,
    \end{equation*}
    which finishes the proof.
\end{proof}

\section{Concluding remarks}
The landscape of Ramsey theory for oriented graphs is much less explored
than for undirected graphs.
It would be interesting
to obtain better bounds for the oriented Ramsey number 
of specific families of graphs.
In particular, we leave unaddressed the question of whether the upper
bound of Theorem~\ref{thm:dir_rms}
is tight for some oriented graph.

Moreover, one could also consider not only
orientations of graphs, but also orientations
\emph{and} colorings of edges, and require the oriented copy
to be monochromatic.  
The resulting Ramsey number was studied, for instance,
in a work due to Buci\'{c}, Letzter and
Sudakov~\cite{bucic_letzter_sudakov_oriented_colors}.
We believe that
an upper bound 
to the threshold probability of this property,
like that of Theorem~\ref{thm:random_dir_rms},
can also be proved with the techniques of this work.

Our work also leaves open the question 
of proving a lower bound for the threshold probability
of the property $\{ G(n,p) \rms \dgh \}$,
matching the upper bound of Theorem~\ref{thm:random_dir_rms}.
One can easily check that this upper bound is not tight for the
transitive tournament on~$3$ vertices
(if $G$ contains a $K_4$, then every orientation of $G$ contains the
tournament on~$3$ vertices).
It would be very interesting
to understand for which oriented graphs this upper bound is tight,
and to prove tighter bounds when this is not the case.

Finally, one could also try to apply the techniques of
Section~\ref{sec:isometric} to derive bounds for the
isometric Ramsey number of other graphs, like directed paths.

\section{Acknowledgements}
The author thanks Marcelo Tadeu de Sá Oliveira Sales 
for helpful comments in an early
stage of this work, and Yoshiharu Kohayakawa and Tássio Naia 
for proofreading an earlier version of this work
and for providing
numerous suggestions, helpful comments and encouragements.

\bibliographystyle{plain}
\bibliography{bibliography}

\begin{thebibliography}{10}

\bibitem{balko_ordered}
Martin Balko, Josef Cibulka, Karel Kr{\'{a}}l, and Jan Kyn\v{c}l.
\newblock Ramsey numbers of ordered graphs.
\newblock {\em Electronic Notes in Discrete Mathematics}, 49:419 -- 424, 2015.
\newblock The Eight European Conference on Combinatorics, Graph Theory and
  Applications, EuroComb 2015.

\bibitem{hypergraphs_morris}
J\'ozsef Balogh, Robert Morris, and Wojciech Samotij.
\newblock Independent sets in hypergraphs.
\newblock {\em J. Amer. Math. Soc.}, 28(3):669--709, 2015.

\bibitem{banakh16:_isomet}
Taras Banakh, Adam Idzik, Oleg Pikhurko, Igor Protasov, and Krzysztof
  Pszczoła.
\newblock Isometric copies of directed trees in orientations of graphs.
\newblock {\em Journal of Graph Theory}, pages 1--17, 2019.

\bibitem{bloom_burr_unavoidable}
Gary~S. Bloom and Stefan~A. Burr.
\newblock On unavoidable digraphs in orientations of graphs.
\newblock {\em J. Graph Theory}, 11(4):453--462, 1987.

\bibitem{bondy_murty}
J.~A. Bondy and U.~S.~R. Murty.
\newblock {\em Graph theory}, volume 244 of {\em Graduate Texts in
  Mathematics}.
\newblock Springer, New York, 2008.

\bibitem{bucic_letzter_sudakov_oriented_colors}
Matija Buci\'{c}, Shoham Letzter, and Benny Sudakov.
\newblock Directed {R}amsey number for trees.
\newblock {\em J. Combin. Theory Ser. B}, 137:145--177, 2019.

\bibitem{cochand_duchet}
M.~Cochand and P.~Duchet.
\newblock A few remarks on orientation of graphs and {R}amsey theory.
\newblock In {\em Irregularities of partitions ({F}ert\H od, 1986)}, volume~8
  of {\em Algorithms Combin. Study Res. Texts}, pages 39--46. Springer, Berlin,
  1989.

\bibitem{induced_container}
D.~{Conlon}, D.~{Dellamonica}, Jr., S.~{La Fleur}, V.~{R{\"o}dl}, and
  M.~{Schacht}.
\newblock {A note on induced Ramsey numbers}.
\newblock {\em arXiv e-prints}, June 2016.

\bibitem{conlon_ordered}
David Conlon, Jacob Fox, Choongbum Lee, and Benny Sudakov.
\newblock Ordered {R}amsey numbers.
\newblock {\em J. Combin. Theory Ser. B}, 122:353--383, 2017.

\bibitem{sahili_oriented_trees}
A.~El~Sahili.
\newblock Trees in tournaments.
\newblock {\em J. Combin. Theory Ser. B}, 92(1):183--187, 2004.

\bibitem{erdos_moser_tournaments}
P.~Erd{\H{o}}s and L.~Moser.
\newblock On the representation of directed graphs as unions of orderings.
\newblock {\em Magyar Tud. Akad. Mat. Kutat\'o Int. K\"ozl.}, 9:125--132, 1964.

\bibitem{ramsey_erdos2}
P.~Erd\"os and G.~Szekeres.
\newblock A combinatorial problem in geometry.
\newblock {\em Compositio Math.}, 2:463--470, 1935.

\bibitem{haggkvist_thomason}
Roland H\"{a}ggkvist and Andrew Thomason.
\newblock Trees in tournaments.
\newblock {\em Combinatorica}, 11(2):123--130, 1991.

\bibitem{han_ramsey_containers}
H.~H{\`a}n, T.~Retter, V.~R{\"o}dl, and M.~Schacht.
\newblock Ramsey-type numbers involving graphs and hypergraphs with large
  girth.
\newblock {\em Combin. Probab. Comput.}
\newblock To appear.

\bibitem{havet_thomasse}
Fr\'{e}d\'{e}ric Havet and St\'{e}phan Thomass\'{e}.
\newblock Oriented {H}amiltonian paths in tournaments: a proof of {R}osenfeld's
  conjecture.
\newblock {\em J. Combin. Theory Ser. B}, 78(2):243--273, 2000.

\bibitem{orientations_hamilton_cycles}
M.-C. Heydemann, D.~Sotteau, and C.~Thomassen.
\newblock Orientations of {H}amiltonian cycles in digraphs.
\newblock {\em Ars Combin.}, 14:3--8, 1982.

\bibitem{proof_sumner_conjecture}
Daniela K\"{u}hn, Richard Mycroft, and Deryk Osthus.
\newblock A proof of {S}umner's universal tournament conjecture for large
  tournaments.
\newblock {\em Proc. Lond. Math. Soc. (3)}, 102(4):731--766, 2011.

\bibitem{kuhn_directed}
Daniela K\"uhn, Deryk Osthus, Timothy Townsend, and Yi~Zhao.
\newblock On the structure of oriented graphs and digraphs with forbidden
  tournaments or cycles.
\newblock {\em J. Combin. Theory Ser. B}, 124:88--127, 2017.

\bibitem{naia_unavoiadble_trees}
Richard Mycroft and T\'{a}ssio Naia.
\newblock Unavoidable trees in tournaments.
\newblock {\em Random Structures Algorithms}, 53(2):352--385, 2018.

\bibitem{naia_phdthesis}
Tássio Naia.
\newblock {\em Large Structures in Dense Directed Graphs}.
\newblock PhD thesis, University of Birmingham, 7 2018.

\bibitem{nenadov_steger}
Rajko Nenadov and Angelika Steger.
\newblock A short proof of the random {R}amsey theorem.
\newblock {\em Combin. Probab. Comput.}, 25(1):130--144, 2016.

\bibitem{radz_survey_ramsey}
Stanis\l aw~P. Radziszowski.
\newblock Small {R}amsey numbers.
\newblock {\em Electron. J. Combin.}, 1:Dynamic Survey 1, 30, 1994.

\bibitem{ramsey_original}
F.~P. Ramsey.
\newblock On a problem of formal logic.
\newblock {\em Proc. London Math. Soc. (2)}, 30(4):264--286, 1929.

\bibitem{rodl_threshold}
Vojt{\v e}ch R\"odl and Andrzej Ruci\'nski.
\newblock Threshold functions for {R}amsey properties.
\newblock {\em J. Amer. Math. Soc.}, 8(4):917--942, 1995.

\bibitem{rodl_folkman}
Vojt{\v{e}}ch R{\"o}dl, Andrzej Ruci{\'{n}}ski, and Mathias Schacht.
\newblock An exponential-type upper bound for folkman numbers.
\newblock {\em Combinatorica}, May 2016.

\bibitem{rodl_orderings}
Vojt{\v e}ch R{\"o}dl and Peter Winkler.
\newblock A {R}amsey-type theorem for orderings of a graph.
\newblock {\em SIAM J. Discrete Math.}, 2(3):402--406, 1989.

\bibitem{saxton_containers}
David Saxton and Andrew Thomason.
\newblock Hypergraph containers.
\newblock {\em Invent. Math.}, 201(3):925--992, 2015.

\bibitem{thomason_paths_cycles}
Andrew Thomason.
\newblock Paths and cycles in tournaments.
\newblock {\em Trans. Amer. Math. Soc.}, 296(1):167--180, 1986.

\end{thebibliography}

\end{document}